\newtheorem{theorem}{Theorem}[section]    
\newtheorem{lemma}[theorem]{Lemma}
\newtheorem{remark}[theorem]{Remark}
\newtheorem{corollary}[theorem]{Corollary}
\newcommand{\dt}{\Delta t}
\newcommand{\dx}{\Delta x}
\newcommand{\real}{\mathbb{R}}
\newcommand{\compl}{\mathbb{C}}
\newcommand{\infnorm}[1]{\lVert#1\rVert_\infty}
\newcommand{\bigo}{\mathcal{O}}
\let\oldexp\exp
\renewcommand{\exp}[1]{\oldexp{\left(#1\right)}}
\DeclarePairedDelimiter\abs{\lvert}{\rvert}%
\DeclarePairedDelimiter\norm{\lVert}{\rVert}%
\newcommand{\fancyrefequlabelprefix}{equ}
\newcommand{\fancyrefsubseclabelprefix}{subsec}
\newcommand{\fancyreftheoremlabelprefix}{theo}
\newcommand{\fancyrefcorollarylabelprefix}{cor}
\newcommand{\fancyrefdefinitionlabelprefix}{def}
\newcommand{\fancyreflemmalabelprefix}{lem}
\newcommand{\fancyrefremarklabelprefix}{rem}
\newcommand{\fancyrefalgorithmlabelprefix}{alg}
\newcommand{\diff}[1]{#1}
\title{Convergence analysis of multi-level spectral deferred corrections}
\author{Gitte Kremling}
\address{Juelich Supercomputing Centre, Forschungszentrum Juelich GmbH, 52425 Juelich, Germany}
\email{g.kremling@fz-juelich.de}
\author{Robert Speck}
\address{Juelich Supercomputing Centre, Forschungszentrum Juelich GmbH, 52425 Juelich, Germany}
\email{r.speck@fz-juelich.de}
\begin{document}

\begin{abstract}
The spectral deferred correction (SDC) method is class of iterative solvers for ordinary differential equations (ODEs). It can be interpreted as a preconditioned Picard iteration for the collocation problem. The convergence of this method is well-known, for suitable problems it gains one order per iteration up to the order of the quadrature method of the collocation problem provided. This appealing feature enables an easy creation of flexible, high-order accurate methods for ODEs. 
A variation of SDC are multi-level spectral deferred corrections (MLSDC). Here, iterations are performed on a hierarchy of levels and an FAS correction term, as in nonlinear multigrid methods, couples solutions on different levels. While there are several numerical examples which show its capabilities and efficiency, a theoretical convergence proof is still missing. This paper addresses this issue. A proof of the convergence of MLSDC, including the determination of the convergence rate in the time-step size, will be given and the results of the theoretical analysis will be numerically demonstrated. It turns out that there are restrictions for the advantages of this method over SDC regarding the convergence rate.
\end{abstract}

\maketitle

\section{Introduction}

The original spectral deferred correction (SDC) method for solving ordinary differential equations (ODEs), a variant of the defect and deferred correction methods developed in the 1960s \cite{def-corr-1,def-corr-2,def-corr-3,def-corr-4}, was first introduced in \cite{sdc-orig} and then subsequently improved, e.g. in \cite{hansen,huang,imex-1,imex-2}. It relies on a discretization of the initial value problem in terms of a collocation problem which is then iteratively solved using a preconditioned fixed-point iteration. The iterative structure of SDC has been proven to provide many opportunities for algorithmic and mathematical improvements. 
These include the option of using Newton-Krylov schemes such as the Newton-GMRES method to solve the resulting preconditioned nonlinear systems, leading to the so-called Krylov deferred correction methods~\cite{huang,kdc}. Various semi-implicit and multi-implicit formulations of the method have been explored~\cite{hagstrom,imex-1,imex-2,BourliouxEtAl2003,LaytonMinion2004}. In the last decade, SDC has been applied e.g.\ to gas dynamics and incompressible or reactive flows~\cite{appl-1,imex-2} as well as to fast-wave slow-wave problems~\cite{sdc-expl-euler} or particle dynamics~\cite{appl-2}. The generalized integral deferred correction framework includes further variations of SDC, where the used discretization approach is not limited to collocation methods~\cite{idc-1,idc-2}. Moreover, the SDC approach was used to derive efficient parallel-in-time solvers addressing the needs of modern high-performance computing architectures~\cite{pfasst,sdc-notation}.

Here, we will focus on the multi-level extension of SDC, namely multi-level spectral deferred corrections (MLSDC), which was introduced in \cite{mlsdc-1}. It uses a multigrid-like approach to solve the collocation problem with SDC iterations (now called ``sweeps'' in this context) being performed on the individual levels. The solutions on the different levels are then coupled using the Full Approximation Scheme (FAS) coming from nonlinear multigrid methods. This variation was designed to improve the efficiency of the method by shifting some of the work to coarser, less expensive levels.
While there are several numerical examples which show the correctness and efficiency of MLSDC \cite{mlsdc-2,doi:10.1080/13647830.2019.1566574,HAMON2019435}, a theoretical proof of its convergence is still missing. The convergence of SDC, however, was already extensively examined \cite{causley,hagstrom,hansen,huang,xia,tang}. It could be shown that, under certain conditions, the method gains one order per iteration up to the accuracy of the solution of the collocation problem. The aim of this work now is to prove statements on the convergence behavior of MLSDC using similar concepts and ideas as they were used in the convergence proof of SDC, in particular the one presented in \cite{tang}.

For that, we first review SDC along with one of its existing convergence proofs, forming the basis for the following convergence analysis of MLSDC. Then, MLSDC is described and a first convergence theorem is provided. The theorem specifically states that MLSDC behaves at least as good as SDC does. Since this result contradicts our intuitive expectations in that we would assume the multi-level extension to be more efficient than the original one, we will again examine the convergence proof in greater detail, now for a specific choice of transfer operators between the different levels. As a result, a second theorem on the convergence of MLSDC will be derived, describing an improved behavior of the method if particular conditions are fulfilled. 
More specifically, we will provide theoretical guidelines for parameter choices in practical applications of MLSDC in order to achieve this improved efficiency. Finally, the theoretical results will be verified by numerical examples.

\section{Spectral Deferred Corrections}
\label{sec:sdc}

In the following SDC is presented as preconditioned Picard iterations for the collocation problem. The used approach and notations are substantially based on \cite{mlsdc-1,conv-pfasst} and references therein. First, the collocation problem for a generic initial value problem is explained. Then, SDC is described as a solver for this problem and compact notations are introduced. Finally, an existing theorem on the convergence of SDC, including its proof, is presented. 

\subsection{SDC and the collocation problem}

Consider the following autonomous initial value problem (IVP)
\begin{align}\begin{split}
    \label{equ:ivp}
    u'(t) &= f(u(t)), \quad t\in[t_0, t_1],\\
    u(t_0) &= u_0
\end{split}\end{align}
with $u(t), u_0 \in \compl^N$ and $f:\compl^N\to\compl^N$, $N\in\mathbb{N}$. To guarantee the existence and uniqueness of the solution, $f$ is required to be Lipschitz continuous. Since a high-order method shall be used, $f$ is additionally assumed to be sufficiently smooth.

The IVP can be written as
\begin{align*}
    u(t) = u_0 + \int_{t_0}^t f(u(s))ds, \quad t\in[t_0, t_1]
\end{align*}
and choosing $M$ quadrature nodes $\tau_1, ..., \tau_M$ within the time interval such that $t_0 \le \tau_1 < \tau_2 < ... < \tau_M = t_1$, the integral is now approximated using a spectral quadrature rule like Gauß-Radau. 
This approach results in the discretized system of equations
\begin{align}
    \label{equ:intgl1}
    u_m = u_0 + \Delta t \sum_{j=1}^M q_{m,j} f(u_j), \quad m=1,...,M,
\end{align}
where $u_m \approx u(\tau_m)$, $\dt = t_1 - t_0$ denotes the time step size and $q_{m,j}$ represent the quadrature weights for the unit interval with
\begin{align*}
    q_{m,j} = \frac{1}{\dt} \int_{t_0}^{\tau_m} l_j(s) ds.
\end{align*}
Here, $l_j$ represents the $j$-th Lagrange polynomial corresponding to the set of nodes $(\tau_m)_{1 \leq m \leq M}$.
We can combine these $M$ equations into the following system of linear or non-linear equations, defining the collocation problem:
\begin{gather}
	C(U) \coloneqq (I_{MN} - \Delta t (Q \otimes I_N) F)(U) = U_0,
	\label{equ:coll_prob}
\end{gather}
where $U \coloneqq (u_1, u_2, \dots, u_M)^T \in \compl^{MN}$, $U_0 \coloneqq (u_0, u_0, \dots, u_0)^T \in \compl^{MN}$, $Q \coloneqq (q_{m,j})_{1 \leq m,j \leq M}$ is the matrix gathering the quadrature weights, the vector function $F$ is given by $F(U) \coloneqq (f(u_1), f(u_2), \dots, f(u_M))^T$ and $I_{MN}, I_N$ are the identity matrices of dimensions $MN$ and $N$.

As described above, the solution $U$ of the collocation problem approximates the solution of the initial value problem (\ref{equ:ivp}). With this in mind, the following theorem, referring to \cite[Thm.~7.10]{hairer-wanner-1}, provides a statement on its order of accuracy.

\begin{theorem}
	\label{theo:coll_prob_err}
	The solution $U = (u_1, u_2, \dots, u_M)^T \in \compl^{MN}$ of the collocation problem defined by \fref{equ:coll_prob} approximates the solution $u$ of the IVP \eqref{equ:ivp} at the collocation nodes. In particular, for $\bar{U} \coloneqq (u(\tau_1), \dots, u(\tau_M))^T$ the following error estimation applies:
	\begin{align*}
    	\| \bar{U} - U \|_\infty \leq C_1 \Delta t^{M+1} \|u\|_{M+1},
	\end{align*}
	where $C_1$ is independent of $\Delta t$, $M$ denotes the number of nodes and $\|u\|_{M+1}$ represents the maximum norm of $u^{(M+1)}$, the $(M+1)$th derivative of $u$.
\end{theorem}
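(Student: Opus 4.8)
The plan is to follow the classical collocation-error argument behind \cite[Thm.~7.10]{hairer-wanner-1}, but phrased directly in terms of the operator $C$ from \fref{equ:coll_prob}: I would show that the exact nodal values $\bar U$ solve a \emph{perturbed} collocation problem whose right-hand side differs from $U_0$ only by an $\bigo(\dt^{M+1})$ term, and then transfer this estimate to $\bar U - U$ using Lipschitz continuity of $f$.

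First I would bound the consistency error. Since $u$ solves \eqref{equ:ivp}, integrating gives $u(\tau_m) = u_0 + \int_{t_0}^{\tau_m} f(u(s))\,ds$. Let $p$ be the (vector-valued) polynomial of degree $\leq M-1$ interpolating $s \mapsto f(u(s))$ at $\tau_1,\dots,\tau_M$; by the definition of the weights $q_{m,j}$ one has $\dt\sum_{j=1}^M q_{m,j} f(u(\tau_j)) = \int_{t_0}^{\tau_m} p(s)\,ds$, hence
\begin{align*}
  u(\tau_m) - u_0 - \dt\sum_{j=1}^M q_{m,j} f(u(\tau_j)) = \int_{t_0}^{\tau_m}\bigl(f(u(s)) - p(s)\bigr)\,ds .
\end{align*}
Rescaling to the unit interval, the standard interpolation-remainder bound gives $\infnorm{f(u(\cdot)) - p} \leq c\,\dt^{M}\sup_{s}\infnorm{(f\circ u)^{(M)}(s)}$ on $[t_0,t_1]$, and since $f$ is sufficiently smooth, the chain rule (Fa\`a di Bruno's formula) bounds $(f\circ u)^{(M)}$ in terms of $\|u\|_{M+1}$ and derivatives of $f$, the latter absorbed into the constant. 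Integrating over an interval of length $\leq \dt$ then yields $C(\bar U) = U_0 + E$ with $\infnorm{E} \leq \tilde C\,\dt^{M+1}\,\|u\|_{M+1}$.

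Next I would subtract the two problems. From $C(U) = U_0$ and the definition of $C$,
\begin{align*}
  (\bar U - U) - \dt (Q\otimes I_N)\bigl(F(\bar U) - F(U)\bigr) = E ,
\end{align*}
so, taking $\infnorm{\cdot}$ and using $\infnorm{Q\otimes I_N} = \infnorm{Q}$ together with the Lipschitz bound $\infnorm{F(\bar U)-F(U)} \leq L\,\infnorm{\bar U - U}$, one gets $\infnorm{\bar U - U} \leq \infnorm{E} + \dt\,\infnorm{Q}\,L\,\infnorm{\bar U - U}$. For $\dt$ small enough that $\dt\,\infnorm{Q}\,L \leq \tfrac12$, this rearranges to $\infnorm{\bar U - U} \leq 2\,\infnorm{E} \leq C_1\,\dt^{M+1}\,\|u\|_{M+1}$, which is the assertion, with $C_1$ depending on $M$, $L$ and bounds on the derivatives of $f$ but not on $\dt$.

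I expect the interpolation-error step to be the only delicate one: one must pin down the interpolation constant on a generic interval of length $\dt$ (cleanly handled by rescaling to $[0,1]$) and control $(f\circ u)^{(M)}$ so that the final bound is genuinely in terms of $\|u\|_{M+1}$, with all dependence on $f$ hidden in the constant. A secondary point, if existence of the collocation solution $U$ is not simply taken as given, is that \eqref{equ:coll_prob} is uniquely solvable for small $\dt$; this follows from the same contraction estimate $\dt\,\infnorm{Q}\,L < 1$ by applying the Banach fixed-point theorem to $U \mapsto U_0 + \dt(Q\otimes I_N)F(U)$.
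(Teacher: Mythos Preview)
The paper does not actually supply a proof of this theorem; it is stated with a bare reference to \cite[Thm.~7.10]{hairer-wanner-1} and used as a black box throughout. So there is no paper proof to compare against, and your proposal is a correct reconstruction of the standard collocation-error argument behind that reference.

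One simplification you should not miss: you invoke Fa\`a di Bruno to bound $(f\circ u)^{(M)}$ in terms of $\|u\|_{M+1}$ and derivatives of $f$, but this is unnecessary. Since $u$ solves the ODE, $f(u(s)) = u'(s)$ identically, and hence $(f\circ u)^{(M)} = u^{(M+1)}$ directly. This makes the interpolation-remainder step clean: the bound is \emph{exactly} $c\,\dt^{M}\,\|u\|_{M+1}$ with no hidden dependence on higher derivatives of $f$ in that term (the Lipschitz constant of $f$ still enters in the second step, of course). Apart from this, your outline is sound; the contraction estimate $\dt\,\infnorm{Q}\,L < 1$ is the same smallness condition that recurs in the paper's later proofs via \fref{lem:estimation}.
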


Interpreting the collocation problem as a discretization method with discretization parameter $n \coloneqq \Delta t^{-1}$, the theorem shows that the discrete approximation $U$ defined by the collocation problem converges with order $M+1$ to the solution $\bar{U}$ of the corresponding IVP.

Since the system of equations \eqref{equ:coll_prob} defining the collocation problem has an unfavorable structure with $Q$ gathering the quadrature weights being fully populated, a direct solution is not advisable, in particular if the right-hand side of the ODE is non-linear. An iterative method to solve the problem is SDC. 

The standard Picard iteration for the collocation problem \eqref{equ:coll_prob} is given by
\begin{align}\begin{split}
    \label{equ:richardson}
    U^{(k+1)} &= U^{(k)} + (U_0 - C(U^{(k)}))\\
    &= U_0 + \Delta t (Q \otimes I_N)F(U^{(k)}).
\end{split}\end{align}

As this method only converges for very small step sizes $\Delta t$, using a preconditioner to increase range and speed of convergence is reasonable. The SDC-type preconditioners are defined by
\begin{align*}
    P(U) = (I_{MN} - \Delta t (Q_\Delta \otimes I_N) F) (U),
\end{align*}
where the matrix $Q_\Delta = (q_{\Delta_{m,j}})_{1 \leq m,j \leq M} \approx Q$ is formed by the use of a simpler quadrature rule. In particular, $Q_\Delta$ is typically a lower triangular matrix, such that solving the system can be easily done by forward substitution.

Common choices for $Q_\Delta$ include the matrix
\begin{align*}
    Q_\Delta = \frac{1}{\Delta t}
    \begin{pmatrix}
        \Delta \tau_1 \\
        \Delta \tau_1 & \Delta \tau_2 \\
        \vdots & \vdots & \ddots \\
        \Delta \tau_1 & \Delta \tau_2 & \hdots & \Delta \tau_M
    \end{pmatrix}
\end{align*}
with $\Delta \tau_m = \tau_m - \tau_{m-1}$ for $m=2,...,M$ and $\Delta \tau_1 = \tau_1 - t_0$ representing the right-sided rectangle rule. Similarly, the left-sided rectangle rule \cite{sdc-expl-euler} or a part of the $LU$ decomposition of the matrix $Q$ \cite{sdc-lu} are chosen. The theoretical considerations in the next chapters do not rely on a specific matrix $Q_\Delta$. However, in the numerical examples the right-sided rectangle rule as given above is used.

By the use of such an operator to precondition the Picard iteration (\ref{equ:richardson}), the following iterative method for solving the collocation problem is obtained
\begin{align}\begin{split}
    \label{equ:sdc}
    (I_{MN} - \Delta t (Q_\Delta \otimes I_N)F) (U^{(k+1)})
    = U_0 + \Delta t ((Q-Q_\Delta) \otimes I_N)F(U^{(k)}),
\end{split}\end{align}
which constitutes the SDC iteration \cite{sdc-orig,huang}. 
Written down line-by-line, this formulation recovers the original SDC notation given in~\cite{sdc-orig}.
A more implicit formulation is given by
\begin{align}\begin{split}
    \label{equ:sdc_implicit}
    U^{(k+1)} = U_0 + \Delta t (Q_\Delta \otimes I_N)F(U^{(k+1)}) + \Delta t ((Q-Q_\Delta) \otimes I_N)F(U^{(k)})
\end{split}\end{align}
and this will be used for the following convergence considerations.

\subsection{Convergence of SDC}

There already exist several approaches proving the convergence of SDC, particularly those presented in \cite{causley,hagstrom,hansen,huang,xia,tang}. Here, we will focus on the idea of the proof from \cite{tang} as it uses the previously introduced matrix formulation of SDC, needed for an appropriate adaptation for a convergence proof of MLSDC, and simultaneously, provides a general result for linear and non-linear initial value problems. 
We will review the idea of this proof in some detail to introduce the notation and the key ideas.
This is followed by a concise discussion on stability and convergence of SDC in the sense of one-step ODE solvers.
The approach in \cite{tang} relies on a split of the local truncation error (LTE). The key concept used in the proof is a property of the operators $QF(U)$ and $Q_\Delta F(U)$, respectively, which can be interpreted as a kind of extended Lipschitz continuity. It is presented in the following lemma using the previously introduced notations. For reasons of readability, the sizes of the identity matrices are no longer denoted here.
        
\begin{lemma}
    \label{lem:estimation}
    If $f:\compl^N \to \compl^N$ is Lipschitz continuous, the following estimates apply
    \begin{align*}\begin{split}
        \infnorm{\Delta t (Q \otimes I) (F(U_1) - F(U_2))}
        &\leq C_2 \Delta t \infnorm{U_1 - U_2},\\
        \infnorm{\Delta t (Q_\Delta \otimes I) (F(U_1) - F(U_2))}
        &\leq C_3 \Delta t \infnorm{U_1 - U_2},
    \end{split}\end{align*}
    where the constants $C_2$ and $C_3$ are dependent on the Lipschitz constant $L$, but independent of $\Delta t$ and $U_1, U_2 \in \compl^{NM}$.
\end{lemma}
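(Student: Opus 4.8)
The plan is to reduce both estimates to the ordinary Lipschitz continuity of $f$ together with the observation that $Q$ and $Q_\Delta$ are fixed $M\times M$ matrices whose entries do not depend on $\dt$. First I would record that $F$ inherits Lipschitz continuity block-by-block: writing $U_1 = (u_{1,1},\dots,u_{1,M})^T$ and $U_2 = (u_{2,1},\dots,u_{2,M})^T$ in $\compl^{MN}$, the definition $F(U) = (f(u_1),\dots,f(u_M))^T$ and the $L$-Lipschitz property of $f$ give
\[
\infnorm{F(U_1) - F(U_2)} = \max_{1\le m\le M}\infnorm{f(u_{1,m}) - f(u_{2,m})} \le L \max_{1\le m\le M}\infnorm{u_{1,m} - u_{2,m}} = L\,\infnorm{U_1 - U_2},
\]
so that $F$ is $L$-Lipschitz with respect to $\infnorm{\cdot}$ on $\compl^{MN}$.

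Next I would use submultiplicativity of the matrix norm induced by $\infnorm{\cdot}$, namely $\infnorm{\dt (A\otimes I)(F(U_1)-F(U_2))} \le \dt\,\infnorm{A\otimes I}\,\infnorm{F(U_1)-F(U_2)}$ for any $M\times M$ matrix $A$. The one small computation is that forming the Kronecker product with $I_N$ merely duplicates entries, so the maximum absolute row sum of $A\otimes I_N$ coincides with that of $A$, i.e.\ $\infnorm{A\otimes I_N} = \infnorm{A}$. Applying this with $A = Q$ and then $A = Q_\Delta$, and combining with the bound on $\infnorm{F(U_1)-F(U_2)}$ from the first step, yields the two claimed estimates with $C_2 \coloneqq L\,\infnorm{Q}$ and $C_3 \coloneqq L\,\infnorm{Q_\Delta}$.

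Finally I would verify that $C_2$ and $C_3$ have the asserted independence: they manifestly do not depend on $U_1,U_2$, they depend on $L$ only through the explicit factor, and — crucially — they do not depend on $\dt$, because the weights $q_{m,j} = \dt^{-1}\int_{t_0}^{\tau_m} l_j(s)\,ds$, and likewise the entries of $Q_\Delta$, are the quadrature weights for the \emph{unit} interval and are therefore determined only by the normalized node distribution, not by the step size. I do not expect a genuine obstacle here; the proof is essentially bookkeeping, and the only two points meriting a line of care are the identity $\infnorm{A\otimes I_N}=\infnorm{A}$ and the $\dt$-independence of $Q$ and $Q_\Delta$, which is exactly why the estimates are phrased with $\dt$ factored out.
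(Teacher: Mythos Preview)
Your proposal is correct and follows essentially the same approach as the paper, which merely states that the estimate follows directly from the definition of the maximum norm, the Lipschitz continuity of $f$, and the compatibility between the maximum absolute row sum norm for matrices and the maximum norm for vectors. Your write-up simply unpacks these ingredients explicitly, including the identity $\infnorm{A\otimes I_N}=\infnorm{A}$ and the $\dt$-independence of the quadrature weights.
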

\begin{proof}
    
    Can be shown directly using the definition of the maximum norm, the Lipschitz continuity of $f$ and the compatibility between maximum absolute row sum norm for matrices and maximum norm for vectors.
\end{proof}

\begin{remark}
    For a system of ODEs stemming from a discretized PDE, the constants $C_2$ and $C_3$ may depend on the spatial resolution given by some grid spacing $\Delta x$, because the Lipschitz constant of $f$ may depend on it.
    In this case we have $C_2 = C_2(\Delta x^{-d})$, $C_3 = C_3(\Delta x^{-d})$ for $d\in\mathbb{N}$.
    For example, using second-order finite differences in space for the heat equation results in the ODE system $u' = Au$ with matrix $A \in \mathcal{O}(\Delta x^{-2})$, i.e. $d=2$ in this case.
    This has to be kept in mind for most of the upcoming results and we will address this point separately in remarks where appropriate.
    This will be particularly relevant for the convergence results in section~\ref{ssec:improved_mlsdc}, where the spatial discretization plays a key role.
    Note, however, that this is a rather pessimistic estimate. 
    When focusing on spatial operators with more restrictive properties (e.g.~linearity) or when using a specific matrix $Q_\Delta$, the convergence results can be improved substantially, both in terms of constants and time-step size restrictions.
    For SDC, this has been already done, see e.g.~\cite{sdc-lu}.
\end{remark}

The following theorem provides a convergence statement for SDC using the presented lemma in the proof.

\begin{theorem}
   	\label{theo:sdc_conv}
    Consider a generic initial value problem like (\ref{equ:ivp}) with a Lipschitz-continuous function $f$ on the right-hand side. 
    
    If the step size $\Delta t$ is sufficiently small, SDC converges linearly to the solution $U$ of the collocation problem with a convergence rate in $\bigo(\dt)$, i.e. the following estimate for the error of the $k$-th iterated $U^{(k)}$ of SDC compared to the solution of the collocation problem is valid:
   	\begin{align}
   	    \label{equ:sdc_conv_iter}
   		\infnorm{U-U^{(k)}} &\leq C_4 \Delta t \infnorm{U - U^{(k-1)}},
   	\end{align}
   	where the constant $C_4$ is independent of $\Delta t$.
   	
   	If, additionally, the solution of the initial value problem $u$ is $(M+1)$-times continuously differentiable, the LTE of SDC compared to the solution $\bar{U}$ of the ODE can be bounded by
   	\begin{align}\begin{split}
   	    \label{equ:sdc_conv_lte}
        \infnorm{\bar{U} - U^{(k)}} &\leq C_5 \Delta t^{k_0+k} \|u\|_{k_0+1} 
        + C_6 \Delta t^{M+1} \|u\|_{M+1}\\
        &= \bigo(\Delta t^{\min(k_0+k, M+1)}),
    \end{split}\end{align}
    where the constants $C_5$ and $C_6$ are independent of $\Delta t$, $k_0$ denotes the approximation order of the initial guess $U^{(0)}$ and $\|u\|_p$ is defined by $\infnorm{u^{(p)}}$.
\end{theorem}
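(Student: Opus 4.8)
The plan is to establish the two estimates in turn, with the extended Lipschitz bound of \fref{lem:estimation} and two triangle-inequality splittings carrying essentially the whole argument. For the linear convergence estimate \eqref{equ:sdc_conv_iter}, I would start from the implicit formulation \eqref{equ:sdc_implicit} and note that the collocation solution $U$ of \eqref{equ:coll_prob} satisfies the same identity with $U^{(k+1)} = U^{(k)} = U$, i.e. $U = U_0 + \dt(Q_\Delta\otimes I)F(U) + \dt((Q-Q_\Delta)\otimes I)F(U)$. Subtracting the two yields
\begin{align*}
    U - U^{(k+1)} = \dt(Q_\Delta\otimes I)\big(F(U) - F(U^{(k+1)})\big) + \dt\big((Q-Q_\Delta)\otimes I\big)\big(F(U) - F(U^{(k)})\big).
\end{align*}
Applying $\infnorm{\cdot}$ and then \fref{lem:estimation} — to the second term after splitting $Q - Q_\Delta$ via the triangle inequality — gives
\begin{align*}
    \infnorm{U - U^{(k+1)}} \leq C_3\dt\,\infnorm{U - U^{(k+1)}} + (C_2 + C_3)\dt\,\infnorm{U - U^{(k)}}.
\end{align*}
For $\dt$ small enough that $C_3\dt \leq \tfrac{1}{2}$, the implicit term is absorbed into the left-hand side and one obtains \eqref{equ:sdc_conv_iter}, e.g. with $C_4 = 2(C_2+C_3)$, independent of $\dt$. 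I would point out beforehand that this same smallness makes the fixed-point map behind \eqref{equ:sdc_implicit} a contraction (with constant $C_3\dt$), so that $U^{(k+1)}$ is well defined.

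For the LTE estimate \eqref{equ:sdc_conv_lte}, I would split against the collocation solution, $\infnorm{\bar U - U^{(k)}} \leq \infnorm{\bar U - U} + \infnorm{U - U^{(k)}}$, bound the first summand by $C_1\dt^{M+1}\|u\|_{M+1}$ using \fref{theo:coll_prob_err}, and iterate \eqref{equ:sdc_conv_iter} $k$ times to get $\infnorm{U - U^{(k)}} \leq (C_4\dt)^k\infnorm{U - U^{(0)}}$. Splitting the initial error once more, $\infnorm{U - U^{(0)}} \leq \infnorm{U - \bar U} + \infnorm{\bar U - U^{(0)}}$, inserting the collocation bound again, and using that the initial guess has approximation order $k_0$ — so that $\infnorm{\bar U - U^{(0)}} = \bigo(\dt^{k_0})$ with the corresponding derivative norm of $u$ — then collecting powers of $\dt$ finishes the argument: the initial-error contribution picks up exactly the $k$ factors of $\dt$ from $(C_4\dt)^k$, producing the $\dt^{k_0+k}\|u\|_{k_0+1}$ term, whereas the two $\dt^{M+1}$ contributions merge into a single one since $(C_4\dt)^k \leq 1$ for small $\dt$. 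This gives \eqref{equ:sdc_conv_lte} with constants $C_5$, $C_6$ that may depend on $k$ but not on $\dt$, and the $\bigo(\dt^{\min(k_0+k,M+1)})$ form is then immediate.

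None of the individual steps is technically deep. The points that need care are, first, fixing a single threshold on $\dt$ that simultaneously guarantees well-posedness of the implicit sweep and allows the $C_3\dt$ term to be absorbed, and second, bookkeeping the constants so that no hidden $\dt$-dependence slips in when the two $\dt^{M+1}$ terms are combined. I expect the only real obstacle to be conceptual rather than computational: this contraction-plus-error-splitting skeleton is exactly what must be rebuilt — now with the FAS coupling term threaded through and the transfer operators between levels taken into account — for the MLSDC convergence proof that follows.
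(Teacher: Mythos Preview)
Your proposal is correct and follows essentially the same route as the paper: subtract the SDC sweep from the collocation identity, apply \fref{lem:estimation}, absorb the implicit $C_3\dt$ term for small $\dt$, and then for the LTE split through $U$, iterate the contraction, and split the initial error once more against $\bar U$. The only cosmetic difference is that you group the right-hand side as $Q_\Delta$ and $Q-Q_\Delta$ from the outset, whereas the paper writes it as $Q$ and $Q_\Delta$ and then needs one additional triangle inequality on $\infnorm{U^{(k-1)}-U^{(k)}}$ to reach the same $(C_2+C_3)\dt\infnorm{U-U^{(k)}} + C_3\dt\infnorm{U-U^{(k+1)}}$ bound; the resulting constants and smallness condition on $\dt$ are identical.
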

\begin{proof}
   	We again closely follow \cite{tang} here. According to the definition of the collocation problem (\ref{equ:coll_prob}) and an SDC iteration (\ref{equ:sdc_implicit}), it follows
   	\begin{align*}
   		\infnorm{U-U^{(k)}} &= \infnorm{\Delta t (Q \otimes I) (F(U) - F(U^{(k-1)}))\\
   			&\quad\quad+ \Delta t (Q_\Delta \otimes I) (F(U^{(k-1)}) - F(U^{(k)}))}.
   	\end{align*}
   	Together with the triangle inequality and \fref{lem:estimation}, we obtain
   	\begin{align*}
   		\infnorm{U-U^{(k)}} &\leq C_2 \Delta t \infnorm{U - U^{(k-1)}} + C_3 \Delta t \infnorm{U^{(k-1)} - U^{(k)}}.
   	\end{align*}
   	Applying the triangle inequality again yields
   	\begin{align*}
   		\infnorm{U-U^{(k)}} 
   		&\leq \tilde{C}_1 \Delta t \infnorm{U - U^{(k-1)}} + C_3 \Delta t \infnorm{U - U^{(k)}},
   	\end{align*}
   	where here and in the following, we use variables in the form of $\tilde{C}_i$ to denote temporary arising constants. We continue by subtracting $C_3 \Delta t \infnorm{U - U^{(k)}}$ from both sides and dividing by $1 - C_3 \Delta t$ which results in
    \begin{align*}
        \infnorm{U-U^{(k)}} &\leq \frac{\tilde{C}_1}{1-C_3 \Delta t} \Delta t \infnorm{U - U^{(k-1)}}.
    \end{align*}
   	If the step size is sufficiently small, in particular 
   	\begin{align}\label{eq:conv_cond}
   	    C_3 \Delta t < 1,
   	\end{align} the following estimate is valid
   	\begin{align}\label{eq:conv_cond_II}
   		\frac{\tilde{C}_1}{1-C_3 \Delta t} \leq C_4,
   	\end{align}
    which concludes the proof for \fref{equ:sdc_conv_iter}.
    
    Continuing with recursive insertion, we get
    \begin{align*}
        \infnorm{U-U^{(k)}} \leq \tilde{C}_2 \Delta t^k \infnorm{U - U^{(0)}}.
    \end{align*}
    Since $U^{(0)}$ is assumed to be an approximation of $k_0$-th order, we further know that
    \begin{align*}
        \norm{\bar{U} - U^{(0)}} \leq \tilde{C}_3 \Delta t^{k_0} \norm{u}_{k_0+1}.
    \end{align*}
    This estimation together with the triangle inequality and the error estimation for the solution of the collocation problem stated in \fref{theo:coll_prob_err} yields
    \begin{align}
        \infnorm{U-U^{(k)}} &\leq \tilde{C}_2 \Delta t^k (\infnorm{\bar{U} - U} + \infnorm{\bar{U} - U^{(0)}}) \nonumber\\
        \label{equ:proof_sdc_err_coll}
        &\leq \tilde{C}_4 \Delta t^{M+k+1} \|u\|_{M+1} + C_6 \Delta t^{k_0+k} \|u\|_{k_0+1}.
    \end{align}
    Altogether, it follows
    \begin{align*}
        \infnorm{\bar{U} - U^{(k)}} &\leq \infnorm{\bar{U} - U} + \infnorm{U - U^{(k)}}\\
        &\leq C_1 \Delta t^{M+1} \|u\|_{M+1} + \tilde{C}_4 \Delta t^{M+k+1} \|u\|_{M+1} + C_6 \Delta t^{k_0+k} \|u\|_{k_0+1}\\
        &= (C_1 + \tilde{C}_4 \Delta t^k) \Delta t^{M+1} \|u\|_{M+1} + C_6 \Delta t^{k_0+k} \|u\|_{k_0+1}.
    \end{align*}
    Since the step size $\Delta t$ is assumed to be sufficiently small, i.e.\ bounded above, the following estimate is valid
    \begin{align*}
       	C_1 + \tilde{C}_4 \Delta t^k \leq  C_5,
    \end{align*}
    which finally concludes the proof for \fref{equ:sdc_conv_lte}.
\end{proof}

\begin{remark}\label{rem:dx_sdc}
    Note that if the right-hand side of the ODE comes from a discretized PDE with a given spatial resolution, no additional restriction is posed.
    In this case, $C_3 = C_3(\Delta x^{-d}) = \tilde{C}_5\Delta x^{-d}$ for some constant $\tilde{C}_5$, so that the condition~\eqref{eq:conv_cond} becomes $\tilde{C}_5\Delta t < \Delta x^{d}$.
    Since we did not specify $Q_\Delta$ here, the SDC iterations can be explicit or implicit and it is natural to obtain such a restriction on the time-step size.
    Similar restrictions can be found in other convergence results for SDC, see e.g.~\cite{causley,hagstrom,huang}.
    Condition~\eqref{eq:conv_cond_II} then translates to
   	\begin{align*}
   		C_4 \geq \frac{\tilde{C}_1}{1-C_3 \Delta t} = \frac{\tilde{C}_6\Delta x^{-d}}{1-\tilde{C}_5\Delta x^{-d} \Delta t} = \frac{\tilde{C}_6}{\Delta x^d-\tilde{C}_5 \Delta t}
   	\end{align*}
   	for some constant $\tilde{C}_6$.
   	Assuming a fixed distance between $\Delta x^p$ and $\tilde{C}_5 \Delta t$ with $\Delta x^d-\tilde{C}_5 \Delta t = \delta$ we can write $C_4 = C_4(\delta^{-1})$ to indicate the dependence of $C_4$ on this distance and not $\Delta x^p$ or $\Delta t$ alone.
   	Thus, $C_4$ does not pose an additional restriction to the convergence of SDC.
   	Note that for $C_5$ we have $C_5 = C_5(\delta^{-(k+1)})$, so that the choice of $\delta$ can increase the constant in front of the $\Delta t^{k_0+k}$-term quite substantially, but it does not affect the $\Delta t^{M+1}$-term coming from the collocation problem itself. 
\end{remark}

\begin{remark}\label{rem:Lip_k}
    More generally, the constant $C_4$ depends on the Lipschitz constant $L$ of $f$ with
    \begin{align*}
        C_4 = C_4(k) \approx \tilde{C}\frac{L^k}{(1-L\dt)^k}\diff{\in\mathcal{O}(L^k)}
    \end{align*}
    for iteration $k$ and some constant $\tilde{C}$\diff{, if $L\dt$ is small enough}. 
    Thus, for $L>1$ the ``constant'' $C_4 = C_4(k)$ can grow significantly with $k$.
    When running numerical convergence studies for a fixed $\dt$, the gain in the error when going from one iterate $U^{(k)}$ to the next one $U^{(k+1)}$ may therefore be not as large as the asymptotic behavior in $\dt$ suggest.
    This is true for all results found in this paper.
\end{remark}

The theorem can be read as a convergence statement for SDC. In particular, the first estimation (\ref{equ:sdc_conv_iter}) shows that SDC, interpreted as an iterative method to solve the collocation problem, converges linearly to the solution of the collocation problem with a convergence rate of $\bigo(\Delta t)$ if $C_4\Delta t < 1$. The second part of the theorem, \fref{equ:sdc_conv_lte} shows that SDC, in the sense of a discretization method, converges with order $\min(k_0+k, M+1)$ to the solution of the initial value problem. In other words, the method gains one order per iteration, limited by the selected number of nodes used for discretization. 

We can now immediately extend this result by looking at the right endpoint of the single time interval (which, in our case, is equal to the last collocation node). There, the convergence rate is limited not by the number of collocation nodes, but by the order of the quadrature.

\begin{corollary}
	\label{cor:conv_last}
	Consider a generic initial value problem like (\ref{equ:ivp}) with a Lipschitz-continuous function $f$ on the right-hand side. Furthermore, let the solution of the initial value problem $u$ be $(2M)$-times continuously differentiable. 
	
	Then, if the step size $\Delta t$ is sufficiently small, the error of the $k$-th iterated of SDC, defined by \fref{equ:sdc_implicit}, at the last collocation node $u_M^{(k)}$, compared to the exact value at this point $u(\tau_M)$, can be bounded by
	\begin{align}
		\label{equ:conv_last_point}   
        \begin{split}
            \infnorm{u(\tau_m) - u_M^{(k)}} &\leq C_7 \Delta t^{2M} \max(\|u\|_{2M}, \|u\|_{M+1})\\
            &\quad+ C_8 \Delta t^{k_0+k}  \max(\|u\|_{k_0+1}, \|u\|_{M+1})
        \end{split}\\
        &= \bigo(\Delta t^{\min(k_0+k, 2M)}),\nonumber
	\end{align}
	where the constants $C_7$ and $C_8$ are independent of $\Delta t$, $k_0$ denotes the approximation order of the initial guess $U^{(0)}$ and $\|u\|_p$ is defined by $\infnorm{u^{(p)}}$.
\end{corollary}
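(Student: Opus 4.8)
The strategy is to isolate the two effects that determine the error at the last node: the superconvergence of the collocation solution there, and the SDC iteration error, the latter being already controlled by the estimates obtained in the proof of \fref{theo:sdc_conv}. Let $U=(u_1,\dots,u_M)^T$ denote the exact solution of the collocation problem \eqref{equ:coll_prob}. I would start from the triangle inequality,
\begin{align*}
    \infnorm{u(\tau_M) - u_M^{(k)}} \leq \infnorm{u(\tau_M) - u_M} + \infnorm{u_M - u_M^{(k)}},
\end{align*}
which separates the (quadrature-limited) collocation error from the iteration error.

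For the first summand I would invoke the classical superconvergence of collocation methods at the right endpoint $\tau_M = t_1$: because the Gauß-Radau rule underlying \eqref{equ:coll_prob} has elevated order there, and because $u$ is assumed to be $2M$-times continuously differentiable (which also supplies all the lower-order smoothness needed below), one obtains a bound of the form $\infnorm{u(\tau_M)-u_M} \leq C_7\,\Delta t^{2M}\,\|u\|_{2M}$, the exponent being the order of the quadrature at the endpoint (cf.\ the source of \fref{theo:coll_prob_err}, \cite{hairer-wanner-1}). For the second summand I would use that $u_M-u_M^{(k)}$ is just one block component of $U-U^{(k)}$, so that $\infnorm{u_M-u_M^{(k)}} \leq \infnorm{U-U^{(k)}}$, and then quote the intermediate estimate \eqref{equ:proof_sdc_err_coll} already established along the way in the proof of \fref{theo:sdc_conv}, namely
\begin{align*}
    \infnorm{U-U^{(k)}} \leq \tilde{C}_4\,\Delta t^{M+k+1}\,\|u\|_{M+1} + C_6\,\Delta t^{k_0+k}\,\|u\|_{k_0+1}.
\end{align*}

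Adding the two bounds and using that $\Delta t$ is sufficiently small (in particular $\Delta t\le 1$, together with the condition $C_3\Delta t<1$ inherited from \fref{theo:sdc_conv}), the only remaining task is bookkeeping: the cross term $\tilde{C}_4\,\Delta t^{M+k+1}\|u\|_{M+1}$ must be folded into the two terms of \eqref{equ:conv_last_point}. If $k\ge M-1$ then $M+k+1\ge 2M$ and it is absorbed into the $\Delta t^{2M}$ term; otherwise, using that $k_0\le M+1$ for the usual choices of initial guess, one has $M+k+1\ge k_0+k$ and it is absorbed into the $\Delta t^{k_0+k}$ term after bounding $\|u\|_{M+1}\le\max(\|u\|_{k_0+1},\|u\|_{M+1})$. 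Collecting constants yields \eqref{equ:conv_last_point}, and taking the smaller of the two exponents gives the $\bigo(\Delta t^{\min(k_0+k,2M)})$ form.

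The one genuinely new ingredient — and hence the main obstacle — is the endpoint superconvergence estimate $\infnorm{u(\tau_M)-u_M}=\bigo(\Delta t^{2M})$. Unlike the generic nodewise bound of \fref{theo:coll_prob_err} it is specific to the quadrature rule (this is precisely why the attainable order at the last node is governed by the quadrature order rather than by $M+1$), it requires the extra smoothness $u\in C^{2M}$, and one has to be careful to state the exponent appropriate for the chosen family of nodes. Everything after that is the triangle inequality plus the reuse of an estimate already proven for SDC, so no further error analysis is needed.
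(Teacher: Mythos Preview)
Your proposal is correct and follows essentially the same route as the paper: the same triangle-inequality split, the Radau endpoint superconvergence $\infnorm{u(\tau_M)-u_M}=\bigo(\Delta t^{2M})$ (the paper cites \cite{hairer-wanner-2}), the componentwise bound $\infnorm{u_M-u_M^{(k)}}\le\infnorm{U-U^{(k)}}$ combined with \eqref{equ:proof_sdc_err_coll}, and the identical case analysis ($k+1\ge M$ versus $k+1<M$ with $k_0\le M+1$) to absorb the $\Delta t^{M+k+1}$ cross term. You have also correctly identified the endpoint superconvergence estimate as the only genuinely new ingredient.
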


\begin{proof}
    The proof mainly relies on the interpretation of the solution of the collocation problem evaluated at the last node $\tau_M$ as the result of a Radau method with $M$ stages. With this in mind, the well-known convergence, or in this case rather consistency, order of Radau methods yields the estimate \cite{hairer-wanner-2}
    \begin{align*}
    	\infnorm{u(\tau_M) - u_M} \leq \tilde{C}_1 \Delta t^{2M} \|u\|_{2M},
    \end{align*}
    where $\tilde{C}_1$ is independent of $\Delta t$. Here and in the following, temporary arising constants will again be denoted by symbols like $\tilde{C}_i$. However, they are separately defined and thus, do not correspond to the ones used in previous proofs.
    
    To use this estimation, we first have to apply the triangle inequality to the left-hand side of \fref{equ:conv_last_point}, in particular
    \begin{align}
    	\label{equ:proof_last_point_triangle}
    	\infnorm{u(\tau_m) - u_M^{(k)}} &\leq \infnorm{u(\tau_m) - u_M} + \infnorm{u_M - u_M^{(k)}}.
    \end{align}
    Then, with the definition of the vector $R \coloneqq (0,\dots,0,1) \in \real^{1 \times M}$ which, multiplied with another vector, only captures its last value, the second term on the right-hand side of the above equation can be transferred to
    \begin{align*}
        \infnorm{u_M - u_M^{(k)}} &= \infnorm{R U - R U^{(k)}} \leq \infnorm{R} \infnorm{U - U^{(k)}} = \infnorm{U - U^{(k)}}\\
        &\le \tilde{C}_2 \Delta t^{M+k+1} \norm{u}_{M+1} + C_6 \Delta t^{k_0+k}  \norm{u}_{k_0+1},
    \end{align*}
    where the last estimate comes from \fref{equ:proof_sdc_err_coll} in the proof of \fref{theo:sdc_conv}.
    Finally, by inserting all these results in \fref{equ:proof_last_point_triangle}, we obtain
    \begin{align*}
        \infnorm{u(\tau_m) - u_M^{(k)}} &\leq \infnorm{u(\tau_m) - u_M} + \infnorm{U - U^{(k)}}\\
        &\leq \tilde{C}_1 \Delta t^{2M} \|u\|_{2M} + \tilde{C}_2 \Delta t^{M+k+1} \|u\|_{M+1} + C_5 \Delta t^{k_0+k}  \|u\|_{k_0+1}.
    \end{align*}
    Note that the leading order of this term is essentially independent of the summand corresponding to $\Delta t^{M+k+1}$. For $\Delta t$ small enough, this result can be seen by a case analysis for $k$. For $k+1 \geq M$, the considered summand is dominated by $\Delta t^{2M} \geq \Delta t^{M+k+1}$ and thus can be disregarded in terms of leading order analysis. In the other case, i.e.\ for $k+1< M$, the considered summand will, however, be greater than the one of order $2M$. Therefore, we will instead compare it to $\Delta t^{k_0+k}$ in this case. Since the number of collocation nodes $M$ is usually chosen to be greater than the approximation order $k_0$ of the initial guess, the relation $\Delta t^{k_0+k} \geq \Delta t^{M+k+1}$ applies and hence, ${k_0+k}$ will be the leading order for $k < M$. Thus, the considered summand $\Delta t^{M+k+1}$ is again dominated by another term and can be disregarded concerning the overall asymptotic behavior. These considerations consequently lead to the following estimation
    \begin{align*}
        \infnorm{u(\tau_m) - u_M^{(k)}} \leq \tilde{C}_3 \Delta t^{2M} \max(\|u\|_{2M}, \|u\|_{M+1}) + C_5 \Delta t^{k_0+k}  \|u\|_{k_0+1}
    \end{align*}
    for $ k \geq M$ and
    \begin{align*}
        \infnorm{u(\tau_m) - u_M^{(k)}} \leq \tilde{C}_1 \Delta t^{2M} \|u\|_{2M} + \tilde{C}_4 \Delta t^{k_0+k}   \max(\|u\|_{k_0+1}, \|u\|_{M+1})
    \end{align*}
    for $k < M$, which can be combined to
    \begin{align*}
        \infnorm{u(\tau_m) - u_M^{(k)}} &\leq C_7 \Delta t^{2M} \max(\|u\|_{2M}, \|u\|_{M+1})\\
        &\quad+ C_8 \Delta t^{k_0+k}  \max(\|u\|_{k_0+1}, \|u\|_{M+1}),
	\end{align*}
    concluding the proof.
\end{proof}


With this corollary, it can be concluded that SDC, in the sense of a single-step method to solve ODEs, is consistent of order $\min(k_0+k-1, 2M-1)$. To extend this result towards a statement on the convergence order of the method an additional proof of its stability is needed. 
The following theorem provides an appropriate result for SDC.

\begin{theorem}
    \label{theo:sdc_stab}
    Consider a generic initial value problem like (\ref{equ:ivp}) with a Lipschitz-continuous function $f$ on the right-hand side. 
    If the step size $\Delta t$ is sufficiently small and an appropriate initial guess is used, the SDC method, defined by \fref{equ:sdc}, is stable.
\end{theorem}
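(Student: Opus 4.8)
The plan is to read ``stability'' here in the sense of one-step ODE solvers, i.e.\ to show that the discrete propagator $\Phi_{\Delta t}\colon u_0 \mapsto u_M^{(k)}$ obtained by performing $k$ SDC sweeps \eqref{equ:sdc} on the interval $[t_0,t_1]$ is Lipschitz continuous with respect to the starting value, with a Lipschitz constant of the form $1 + \bigo(\Delta t)$. This is exactly the zero-stability property that, combined with the consistency estimate of \fref{cor:conv_last}, yields convergence of SDC as a one-step method over a bounded time interval. So the whole proof reduces to a perturbation estimate on the iteration, structurally identical to the one carried out for \fref{theo:sdc_conv}.

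First I would check that, for $\Delta t$ sufficiently small, each sweep \eqref{equ:sdc_implicit} is well defined, so that $\Phi_{\Delta t}$ makes sense. Writing one sweep as $V = U_0 + \Delta t (\QDmat \otimes I) F(V) + (\text{terms depending only on } U^{(k)})$, the map $V \mapsto \Delta t (\QDmat \otimes I) F(V)$ is, by \fref{lem:estimation}, Lipschitz with constant $C_3 \Delta t$; for $C_3 \Delta t < 1$ the Banach fixed-point theorem gives a unique solution (for a lower-triangular $\QDmat$ one may argue equivalently stage by stage). Hence the composition of $k$ such solves is a well-defined map.

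Next I would fix two initial values $u_0, \tilde{u}_0$, let $U^{(j)}, \tilde{U}^{(j)}$ be the corresponding iterates (with $U_0 = (u_0,\dots,u_0)^T$, so that $\infnorm{U_0 - \tilde{U}_0} = \infnorm{u_0 - \tilde{u}_0}$), subtract the two instances of \eqref{equ:sdc_implicit}, and apply the triangle inequality and \fref{lem:estimation} precisely as in the proof of \fref{theo:sdc_conv}. This gives, for a constant $\tilde{C}_1$ independent of $\Delta t$,
\begin{align*}
    (1 - C_3 \Delta t)\,\infnorm{U^{(j+1)} - \tilde{U}^{(j+1)}} \leq \infnorm{u_0 - \tilde{u}_0} + \tilde{C}_1 \Delta t\, \infnorm{U^{(j)} - \tilde{U}^{(j)}}.
\end{align*}
Dividing by $1 - C_3\Delta t > 0$ and iterating this recursion over $j = 0, \dots, k-1$ produces a geometric sum with ratio $b \coloneqq \tilde{C}_1 \Delta t / (1 - C_3 \Delta t) = \bigo(\Delta t)$, hence
\begin{align*}
    \infnorm{U^{(k)} - \tilde{U}^{(k)}} \leq \frac{1}{1 - (C_3 + \tilde{C}_1)\Delta t}\,\infnorm{u_0 - \tilde{u}_0} + b^k\,\infnorm{U^{(0)} - \tilde{U}^{(0)}}, \qquad b \coloneqq \frac{\tilde{C}_1 \Delta t}{1 - C_3 \Delta t}.
\end{align*}

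At this point the hypothesis of an \emph{appropriate initial guess} enters, and this is where I expect the only real subtlety to lie: the guess must be such that $U^{(0)}$ depends on $u_0$ in a Lipschitz manner with constant $1 + \bigo(\Delta t)$, which holds for the standard choices (the ``spread'' guess $U^{(0)} = U_0$, with constant exactly $1$, or an initial explicit/implicit Euler step). With such a guess and $\Delta t$ small enough that $b < 1$, both summands above are bounded by $(1 + C \Delta t)\,\infnorm{u_0 - \tilde{u}_0}$ for a constant $C$ independent of $\Delta t$; since $u_M^{(k)}$ is a block component of $U^{(k)}$, the same bound holds for $\infnorm{u_M^{(k)} - \tilde{u}_M^{(k)}}$. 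Propagating this one-step Lipschitz estimate over the $\bigo(\Delta t^{-1})$ steps of a fixed interval gives an accumulated amplification factor bounded by $e^{C(t_1 - t_0)}$, which is the asserted stability. The recursion itself is routine given \fref{lem:estimation}; the care needed is in stating the initial-guess condition precisely and, to a lesser extent, in justifying the well-posedness of the implicit sweeps.
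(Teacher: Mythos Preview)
Your proposal is correct and follows essentially the same approach as the paper: subtract two instances of the SDC iteration \eqref{equ:sdc_implicit} for perturbed initial data, apply \fref{lem:estimation} together with the triangle inequality to obtain a one-step recursion, iterate it over the $k$ sweeps, and invoke the initial-guess hypothesis to control the base term. The only cosmetic difference is that the paper phrases stability as Lipschitz continuity of the increment function $\phi^{(k)}(u_n) \coloneqq (u_M^{(k)} - u_n)/\Delta t$ and works with the residual vectors $r^{(k)} = U^{(k)} - U_n$ rather than directly with $U^{(k)} - \tilde{U}^{(k)}$; your extra remarks on well-posedness of the implicit sweep and on the global propagation over $\bigo(\Delta t^{-1})$ steps are not in the paper's proof but are natural complements.
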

\begin{proof}
    As usual for single-step methods, we will prove the Lipschitz continuity of the increment function of SDC in order to prove the stability of the method.
    
    A general single-step method is defined by the formula 
    \begin{align*}
        u_{n+1} = u_n + \dt \phi(u_n),    
    \end{align*}
    where $u_n$ denotes the approximation at the time step $t_n$ and $\phi(u_n)$ is the increment function. Our aim now is to identify the specific increment function $\phi$ corresponding to SDC and to, subsequently, show its Lipschitz continuity, i.e. prove the validity of $\abs{\phi(u_n) - \phi(v_n)} \leq L_\phi \abs{u_n - v_n}$.
    
    First, note that the $k$-th iterated of SDC $u_m^{(k)}$ at an arbitrary collocation node $\tau_m$ ($1 \leq m \leq M$) can be written as
    \begin{align*}
        u_m^{(k)} &= u_n + r_m^{(k)} \mbox{ with}\\
        r_m^{(k)} &\coloneqq \dt (Q_\Delta F(U_n+r^{(k)}))_m + \dt ((Q - Q_{\Delta}) F(U_n + r^{(k-1)}))_m,\\
    r^{(k)} &\coloneqq (r_1^{(k)}, \dots, r_M^{(k)})^T \mbox{ and } U_n = (u_n, \dots, u_n)^T
    \end{align*}
    for $k\ge1$ according to a line-wise consideration of \fref{equ:sdc}, where the subscript $m$ denotes the $m$th line of the vectors. Consequently, the corresponding approximation at the time step $t_{n+1} = \tau_M$ can be written as 
    \begin{gather*}
        u_{n+1} = u_M^{(k)} = u_n + r_M^{(k)} \eqqcolon u_n + \dt \phi^{(k)}(u_n)\\
        \mbox{with } \phi^{(k)}(u_n) = \frac{1}{\dt} r_M^{(k)}.
    \end{gather*}
    Hence, we have found an appropriate, albeit implicit definition for the increment function $\phi^{(k)}$ of SDC.
    
    As a second step, it follows an investigation on the Lipschitz continuity of this function. For that, we start by noting that
    \begin{align}
        \label{equ:rel_phi_r}
        \abs{\phi^{(k)}(u_n) - \phi^{(k)}(v_n)} &= \frac{1}{\dt} \abs{r_M^{(k)} - s_M^{(k)}} \leq \frac{1}{\dt} \infnorm{r^{(k)} - s^{(k)}},
    \end{align}
    where the $r$-terms belong to $u_n$ and the $s$-terms to $v_n$. Now, we will further analyze the term $\norm{r^{(k)} - s^{(k)}}$. With the insertion of the corresponding definitions and an application of the triangle inequality, it follows
    \begin{align*}
        \norm{r^{(k)} - s^{(k)}} &\leq \norm{\Delta t Q_\Delta (F(U_n + r^{(k)}) - F(V_n + s^{(k)}))}\\
        &\quad+ \norm{\Delta t (Q - Q_\Delta) (F(U_n + r^{(k-1)}) - F(V_n + s^{(k-1)}))}.
    \end{align*}
    The use of \fref{lem:estimation} and a reapplication of the triangle inequality further yield
    \begin{align*}
        \norm{r^{(k)} - s^{(k)}} &\leq \tilde{C}_1 \dt(\abs{u_n - v_n} + \norm{r^{(k)} - s^{(k)}} + \norm{r^{(k-1)} - s^{(k-1)}}).
    \end{align*}
    Continuing with the same trick as in the proof of \fref{theo:sdc_conv}, namely a subtraction of $\tilde{C}_1 \dt \norm{r^{(k)} - s^{(k)}}$ and a subsequent division by $1- \tilde{C}_1 \dt$, we get
    \begin{align*}
        \norm{r^{(k)} - s^{(k)}} &\leq \frac{\tilde{C}_1}{1- \tilde{C}_1 \dt} \dt (\abs{u_n - v_n} + \norm{r^{(k-1)} - s^{(k-1)}}).
    \end{align*}
    If the step size $\dt$ is sufficiently small, the following estimation applies
    \begin{align*}
        \frac{\tilde{C}_1}{1- \tilde{C}_1 \dt} &\leq \tilde{C}_2
    \end{align*}
    and a subsequent iterative insertion further yields
    \begin{align*}
        \norm{r^{(k)} - s^{(k)}} \leq \tilde{C}_3 \sum_{l = 1}^{k} \dt^l \abs{u_n-v_n} + \tilde{C}_4 \dt^{k} \norm{r^{(0)} - s^{(0)}}.
    \end{align*}
    With the insertion of this result in \fref{equ:rel_phi_r} above, it finally follows
    \begin{align*}
    \abs{\phi^{(k)}(u_n) - \phi^{(k)}(v_n)} \leq C \sum_{l = 0}^{k-1} \dt^l \abs{u_n-v_n} + C \dt^{k-1} \norm{r^{(0)} - s^{(0)}}.
    \end{align*}
    The value of $\norm{r^{(0)} - s^{(0)}}$ depends on the initial guess for the SDC iterations. If, for example, the value at the last time step is used as the initial guess for all collocation nodes, i.e.\ $U^{(0)} = U_n$ and $V^{(0)} = V_n$, we get $\norm{r^{(0)} - s^{(0)}} = 0 - 0 = 0$. If, by contrast, the initial guess is chosen to be zero, it follows $\norm{r^{(0)} - s^{(0)}} = \abs{u_n - v_n}$. Both variants, however, guarantee that $\abs{\phi^k(u_n) - \phi^k(v_n)} \leq C \abs{u_n - v_n}$ which was to be shown. 
\end{proof}


\begin{remark}
    Note that the assumed upper bound for the step size $\dt$ in the previous theorem is the same as the one in \fref{cor:conv_last}, describing the consistency of SDC. Hence, there is no additional restriction for the convergence of the method.
\end{remark}

Together with the last theorem, \fref{cor:conv_last} can be extended towards a convergence theorem for SDC regarded in the context of single-step methods to solve ODEs. Specifically, the proven stability of the method allows a direct transfer of the order of consistency to the order of convergence. Consequently, it follows that SDC, in the sense of a single-step method, converges with order $\min(k_0+k-1, 2M-1)$.

In the next chapter, MLSDC, a multi-level extension of SDC, is described. It is motivated by the assumption that additional iterations on a coarser level may increase the order of accuracy while keeping the costs rather low. In the next chapter we will investigate whether this assumption holds true, i.e. if the convergence order is indeed increased by the additional execution of relatively low-cost iterations on the coarse level.

\section{Multi-Level Spectral Deferred Corrections}
\label{sec:mlsdc}


Multi-level SDC (MLSDC) is a method that uses a multigrid-like approach to solve the collocation problem \eqref{equ:coll_prob}. It is an extension of SDC in which the iterations, now called ``sweeps'' in this context, are computed on a hierarchy of levels and the individual solutions are coupled in the same manner as used in the full approximation scheme (FAS) for non-linear multigrid methods.

The different levels are typically created by using discretizations of various resolutions. In this paper, only the two-level algorithm is considered. For this purpose, let $\Omega_h$ denote the fine level and $\Omega_H$ the coarse one. Then, $U_h$ denotes the discretized vector on $\Omega_h$. Furthermore, $C_h$, $F_h$ and $Q_h$ are the discretizations of the operators and the quadrature matrix. Likewise, $U_H$, $C_H$, $F_H$ and $Q_H$ represent the corresponding values for the discretization parameter $H$.

Here, we will consider two coarsening strategies. The first one is a re-discretization in time at the collocation problem, i.e.\ a reduction of collocation nodes. The second possibility, only applicable if a partial differential equation has to be solved, is a re-discretization in space, i.e.\ the use of less variables for the conversion into an ODE.

Since it is necessary to perform computations on different levels, a method to transfer vectors between the individual levels is needed. For this purpose, let $I_H^h$ denote the operator that transfers a vector from the coarse level $\Omega_H$ to the fine level $\Omega_h$. This operator is called the interpolation operator. $I_h^H$, on the other hand, shall represent the operator for the reverse direction. It is called the restriction operator. Both operators together are called transfer operators.


In detail, the MLSDC two-level algorithm consists of these four steps. Note that for better readability, the enlargements of the matrices $Q$ and $Q_\Delta$ by applying the Kronecker product with the identity matrix are no longer indicated.
\begin{enumerate}
    \item Compute the $\tau$-correction as the difference between coarse and fine level:
            \begin{align}\begin{split}
                \label{equ:tau-korr}
                \tau &= C_H(I_h^H U_h^{(k)}) - I_h^H C_h(U_h^{(k)})\\
                &= I_h^H ( \Delta t Q_h F_h (U_h^{(k)}) ) - \Delta t Q_H F_H(I_h^H U_h^{(k)}).
            \end{split}\end{align}
    \item Perform an SDC sweep to approximate the solution of the modified collocation problem on the coarse level
            \begin{equation}
                \label{equ:qu:coll_coarse}
                C_H(U_H) = U_{0,H} + \tau
            \end{equation}
            on $\Omega_H$, beginning with $I_h^H U_h^{(k)}$:
            \begin{align}\begin{split}
                \label{equ:mlsdc-alg1}
                U_H^{(k+\half)} &= U_{0,H} + \tau + \Delta t Q_{\Delta,H} F_H(U_H^{(k+\half)})\\
                &\quad+ \Delta t (Q_H - Q_{\Delta,H}) F_H(I_h^H U_h^{(k)}).
            \end{split}\end{align}
    \item Compute the coarse level correction:
            \begin{align}
                \label{equ:mlsdc-alg2}
                U_h^{(k+\half)} = U_h^{(k)} + I_H^h (U_H^{(k+\half)} - I_h^H U_h^{(k)}).
            \end{align}
    \item Perform an SDC sweep to approximate the solution of the original collocation problem
            \begin{equation*}
                C(U_h) = U_{0,h}
            \end{equation*}
            on $\Omega_h$, beginning with $U_h^{(k+\half)}$:
            \begin{align}\begin{split}
                \label{equ:mlsdc-alg3}
                U_h^{(k+1)} &= U_{0,h} + \Delta t Q_{\Delta,h} F_h(U_h^{(k+1)})\\
                &\quad+ \Delta t (Q_h - Q_{\Delta,h}) F_h(U_h^{(k+\half)}).
            \end{split}\end{align}
\end{enumerate}

\subsection{A first convergence proof}

Here, we will extend the existing convergence proof for SDC, as presented in \fref{theo:sdc_conv}, to prove the convergence of its multi-level extension MLSDC. The following theorem provides an appropriate convergence statement. In the proof, we use very similar ideas as in the one for the convergence of SDC.        

\begin{theorem}
	\label{theo:mlsdc_conv_gen}
	Consider a generic initial value problem like (\ref{equ:ivp}) with a Lipschitz-continuous function $f$ on the right-hand side. 
    
    If the step size $\Delta t$ is sufficiently small, MLSDC converges linearly to the solution of the collocation problem with a convergence rate in $\bigo(\dt)$, i.e. the following estimate for the error of the $k$-th iterated $U_h^{(k)}$ of MLSDC compared to the solution of the collocation problem $U_h$ is valid:
	\begin{align}
	    \label{equ:mlsdc_conv_gen_iter}
    	\infnorm{U_h - U_h^{(k)}} \leq C_9 \Delta t \infnorm{U_h - U_h^{(k-1)}},
	\end{align}
	where the constant $C_9$ is independent of $\Delta t$.
   	
   	If, additionally, the solution of the initial value problem $u$ is $(M+1)$-times continuously differentiable, the LTE of MLSDC compared to the solution of the ODE can be bounded by
   	\begin{align}
	    \label{equ:mlsdc_conv_gen_lte}
		\|\bar{U} - U_h^{(k)}\|_\infty &\leq C_{10} \Delta t^{k_0+k} \|u\|_{k_0+1} 
		+ C_{11} \Delta t^{M+1} \|u\|_{M+1}\\
        &= \bigo(\Delta t^{\min(k_0+k, M+1)}),
	\end{align}
	where the constants $C_{10}$ and $C_{11}$ are independent of $\Delta t$, $k_0$ denotes the approximation order of the initial guess $U^{(0)}$ and $\|u\|_p$ is defined by $\infnorm{u^{(p)}}$.
\end{theorem}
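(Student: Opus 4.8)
The plan is to reduce the convergence analysis of MLSDC to that of SDC by isolating the single non-SDC ingredient, namely the coarse-grid correction in steps~1--3. The starting point is the observation that step~4 of the algorithm, \fref{equ:mlsdc-alg3}, is literally an SDC sweep on the fine level applied to $U_h^{(k+\half)}$. Hence the one-step estimate \fref{equ:sdc_conv_iter} of \fref{theo:sdc_conv}, used on $\Omega_h$, gives directly
\begin{align*}
  \infnorm{U_h - U_h^{(k)}} \leq \tilde{C}_1 \, \Delta t \, \infnorm{U_h - U_h^{(k-1+\half)}}
\end{align*}
for $\dt$ small enough, with $\tilde{C}_1$ independent of $\dt$. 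It then remains to show that steps~1--3 inflate the error by at most a $\dt$-independent factor, i.e.\ $\infnorm{U_h - U_h^{(k-1+\half)}} \leq \tilde{C}_2 \, \infnorm{U_h - U_h^{(k-1)}}$; combining the two yields \fref{equ:mlsdc_conv_gen_iter} with $C_9 = \tilde{C}_1\tilde{C}_2$.

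The central point is an FAS-consistency observation for the coarse sweep. Since the restriction of the (constant) initial value is again the initial value, $I_h^H U_{0,h} = U_{0,H}$, for the transfer operators under consideration, the exact collocation solution $U_h$ satisfies $I_h^H C_h(U_h) = I_h^H U_{0,h} = U_{0,H}$, which together with the definition \fref{equ:tau-korr} of $\tau$ shows that $I_h^H U_h$ is the exact solution of the $\tau$-corrected coarse collocation problem \fref{equ:qu:coll_coarse}. Therefore the coarse sweep \fref{equ:mlsdc-alg1} is an SDC iteration for a collocation problem whose solution is $I_h^H U_h$, and subtracting \fref{equ:mlsdc-alg1} from the fixed-point identity for $I_h^H U_h$, applying \fref{lem:estimation} on $\Omega_H$, the triangle inequality, and the usual ``subtract $C_3\dt\,\infnorm{\,\cdot\,}$ and divide by $1-C_3\dt$'' trick (exactly as in the proof of \fref{theo:sdc_conv}) yields
\begin{align*}
  \infnorm{I_h^H U_h - U_H^{(k)}} \leq \tilde{C}_3 \, \Delta t \, \infnorm{I_h^H U_h - I_h^H U_h^{(k-1)}} \leq \tilde{C}_3 \, \Delta t \, \infnorm{I_h^H}\,\infnorm{U_h - U_h^{(k-1)}}.
\end{align*}

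Inserting this into the coarse-grid correction \fref{equ:mlsdc-alg2} and writing $U_h - U_h^{(k-1+\half)} = (I - I_H^h I_h^H)(U_h - U_h^{(k-1)}) - I_H^h\bigl(U_H^{(k)} - I_h^H U_h\bigr)$ gives
\begin{align*}
  \infnorm{U_h - U_h^{(k-1+\half)}} \leq \bigl(\infnorm{I - I_H^h I_h^H} + \tilde{C}_4 \Delta t\bigr)\,\infnorm{U_h - U_h^{(k-1)}},
\end{align*}
and since the transfer operators are fixed matrices, $\infnorm{I - I_H^h I_h^H}$, $\infnorm{I_h^H}$ and $\infnorm{I_H^h}$ are constants independent of $\dt$; for $\dt$ small enough the bracket is bounded, which is the desired $\tilde{C}_2$, and \fref{equ:mlsdc_conv_gen_iter} follows. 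For the second estimate \fref{equ:mlsdc_conv_gen_lte} I would argue verbatim as at the end of the proof of \fref{theo:sdc_conv}: recursive insertion of \fref{equ:mlsdc_conv_gen_iter} gives $\infnorm{U_h - U_h^{(k)}} \leq \tilde{C}_5 \Delta t^k \infnorm{U_h - U_h^{(0)}}$, the $k_0$-th order of the initial guess together with \fref{theo:coll_prob_err} bounds $\infnorm{U_h - U_h^{(0)}}$, and a final triangle inequality $\infnorm{\bar{U} - U_h^{(k)}} \leq \infnorm{\bar{U} - U_h} + \infnorm{U_h - U_h^{(k)}}$ combined once more with \fref{theo:coll_prob_err} produces the stated bound.

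The main obstacle is the coarse-grid correction: one must recognize $I_h^H U_h$ as the exact solution of the modified coarse problem so that the coarse sweep can be estimated by the SDC argument without losing the extra $\dt$ factor, and then accept that the term $\infnorm{I - I_H^h I_h^H}$ is merely bounded, not small. This is precisely why MLSDC can here only be shown to \emph{match}, not beat, the $\bigo(\dt)$ rate of SDC, and it motivates the refined analysis with specific transfer operators in \fref{ssec:improved_mlsdc}. Some care is also needed to keep track that all constants involved ($\infnorm{I_h^H}$, $\infnorm{I_H^h}$, $\infnorm{I - I_H^h I_h^H}$ and the level-wise constants from \fref{lem:estimation}) are independent of $\dt$, although, as in the remarks for SDC, they may depend on the spatial resolution when the ODE stems from a discretized PDE.
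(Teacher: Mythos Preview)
Your proposal is correct and follows essentially the same route as the paper: apply the SDC one-step estimate \fref{equ:sdc_conv_iter} to the fine sweep, use the FAS identity $U_H = I_h^H U_h$ to rewrite the coarse-correction error as $(I - I_H^h I_h^H)(U_h - U_h^{(k-1)}) + I_H^h(I_h^H U_h - U_H^{(k-\half)})$, bound the coarse term by $O(\Delta t)\infnorm{U_h - U_h^{(k-1)}}$ via \fref{lem:estimation} and the subtract-and-divide trick, and then repeat the end of the SDC proof for the LTE. The paper's argument is structured identically, including the same observation that the merely bounded factor $\infnorm{I - I_H^h I_h^H}$ is what limits the result to the SDC rate and motivates \fref{theo:mlsdc_conv_prec}.
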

\begin{proof}
    For better readability, the maximum norm $\infnorm{\cdot}$ is denoted with the simple norm $\norm{\cdot}$ within this proof.
    \renewcommand{\infnorm}[1]{\lVert#1\rVert}
    Besides, we consider $U_h^{(k+1)}$ instead of $U_h^{(k)}$ here, in order to enable consistent references to the definition of the MLSDC algorithm above.
    
    As the last step of an MLSDC iteration, in particular \fref{equ:mlsdc-alg3}, corresponds to an SDC iteration, we can use \fref{theo:sdc_conv} to get an initial error estimation. Keeping in mind that the SDC iteration is based on $U_h^{(k+\half)}$ as initial guess here, the application of the mentioned theorem yields the estimation
    \begin{align}
        \label{equ:u-1_rek_u-half}
        \infnorm{U_h - U_h^{(k+1)}} \leq C_4 \Delta t \infnorm{U_h - U_h^{(k+\half)}},
    \end{align}
    if the step size $\Delta t$ is sufficiently small.
    
    Now, the expression on the right-hand side of the above equation will be further examined. In this context, the definition of an MLSDC iteration, in particular \fref{equ:mlsdc-alg2}, yields
    \begin{align*}
        \infnorm{U_h - U_h^{(k+\half)}} &= \infnorm{U_h - U_h^{(k)} - I_H^h \left( U_H^{(k+\half)} - I_h^H U_h^{(k)} \right)}\\
        &= \infnorm{U_h - U_h^{(k)} - I_H^h \left( U_H^{(k+\half)} + U_H - U_H - I_h^H U_h^{(k)} \right)}\\
        &= \infnorm{(I - I_H^h I_h^H) (U_h - U_h^{(k)}) + I_H^h ( U_H - U_H^{(k+\half)} )},
    \end{align*}
    where in the last step we used the identity $U_H = I_h^H U_h$ which applies in consequence of the $\tau$-correction stemming from the usage of FAS. We get
    \begin{align}
        \label{equ:u-half_Ihh}
        \infnorm{U_h - U_h^{(k+\half)}} &\leq \infnorm{(I - I_H^h I_h^H) (U_h - U_h^{(k)})} + \infnorm{I_H^h ( U_H - U_H^{(k+\half)} )}\\
        &\leq \tilde{C}_1 \infnorm{U_h - U_h^{(k)}} + \tilde{C}_2 \infnorm{U_H - U_H^{(k+\half)}},
        \label{equ:u-half}
    \end{align}
    where here and in the following, temporary arising constants will again be denoted by symbols like $\tilde{C}_i$.
    
    Now, it follows a further investigation of the newly emerged term, in particular the second summand of \fref{equ:u-half}. An insertion of the corresponding definitions, namely equations \eqref{equ:coll_prob}, \eqref{equ:qu:coll_coarse} and (\ref{equ:mlsdc-alg1}), together with the application of the triangle inequality and \fref{lem:estimation} yields
    \begin{align*}
        \begin{split}
        \infnorm{U_H - U_H^{(k+\half)}} 
        &= \infnorm{\Delta t Q_H (F_H(U_H) - F_H(I_h^H U_h^{(k)}))\\
        &\quad\quad + \Delta t Q_{\Delta,H} (F_H(I_h^H U_h^{(k)}) - F_H(U_H^{(k+\half)}))} 
        \end{split}\\
        &\leq C_{2,H} \Delta t \infnorm{U_H - I_h^H U_h^{(k)}} + C_{3,H} \Delta t \infnorm{I_h^H U_h^{(k)} - U_H^{(k+\half)}} \nonumber\\
        &\leq \tilde{C}_3 \Delta t \infnorm{U_H - I_h^H U_h^{(k)}} + C_{3,H} \Delta t \infnorm{U_H - U_H^{(k+\half)}} \nonumber.
    \end{align*}
    Subtracting $C_{3,H} \Delta t \infnorm{U_H - U_H^{(k+\half)}}$ from both sides and dividing by $1-C_{3,H} \Delta t$, results in
    \begin{align*}
        \infnorm{U_H - U_H^{(k+\half)}} &\leq \frac{\tilde{C}_3}{1-C_{3,H} \Delta t} \Delta t \infnorm{U_H - I_h^H U_h^{(k)}}.
    \end{align*}
    With the same argumentation as above, given a sufficiently small step size, it follows
    \begin{align}
        \infnorm{U_H - U_H^{(k+\half)}} &\leq \tilde{C}_4 \Delta t \infnorm{U_H - I_h^H U_h^{(k)}} \nonumber\\
        &\leq \tilde{C}_5 \Delta t \infnorm{U_h - U_h^{(k)}},
        \label{equ:u-half_H}
    \end{align}
    where in the last step we used the identity $U_H = I_h^H U_h$.
    
    By inserting \fref{equ:u-half} and this result subsequently into \fref{equ:u-1_rek_u-half}, we obtain
    \begin{align*}
        \infnorm{U_h - U_h^{(k+1)}} &\leq C_4 \Delta t \infnorm{U_h - U_h^{(k+\half)}}\\
        &\leq C_4 \Delta t (\tilde{C}_1 \infnorm{U_h - U_h^{(k)}} + \tilde{C}_2 \infnorm{U_H - U_H^{(k+\half)}})\\
        &\leq \tilde{C}_6 \Delta t \infnorm{U_h - U_h^{(k)}} + \tilde{C}_7 \Delta t ( \tilde{C}_5 \Delta t \infnorm{U_h - U_h^{(k)}})\\
        &= (\tilde{C}_6 + \tilde{C}_8 \Delta t) \Delta t \infnorm{U_h - U_h^{(k)}}.
    \end{align*}
    Since the step size $\Delta t$ is assumed to be sufficiently small, i.e.\ bounded above, the following estimate is valid
    \begin{align*}
        \tilde{C}_6 + \tilde{C}_8 \Delta t \leq C_9,
    \end{align*}
    which concludes the proof for \fref{equ:mlsdc_conv_gen_iter}.
    
	The proof of \fref{equ:mlsdc_conv_gen_lte} is similar to the one of \fref{equ:sdc_conv_lte} in \fref{theo:sdc_conv}, using the previous result.
\end{proof}

\begin{remark}
    As before, if the Lipschitz constant of $f$ depends on the spatial resolution, then constants $\tilde{C}_3$ and $C_{3,H}$ do as well.
    In~\eqref{equ:u-half_H}, constant $\tilde{C}_5$ then only depends on the original condition $1 - C_{3,H} < 1$, which is just the condition we know from SDC, but with a scaled constant if spatial coarsening is applied.
    As in remark~\ref{rem:dx_sdc} we can write $\tilde{C}_5 = \tilde{C}_5(\delta^{-1})$. 
    This only affects $C_{10}$ in \fref{equ:mlsdc_conv_gen_lte}, where now $C_{10} = C_{10}(\delta^{-(k+1)})$.
\end{remark}

As \fref{theo:sdc_conv}, this theorem can also be read as a convergence statement. It shows that MLSDC, interpreted as an iterative method solving the collocation problem, converges linearly with a convergence rate in $\bigo(\Delta t)$ if $C_9 \Delta t < 1$. Moreover, the second part of the theorem shows that MLSDC, in the sense of a discretization method for ODEs, converges with order $\min(k_0+k, M+1)$.

\begin{remark}
    \label{rem:mlsdc_analogous}
    The results regarding consistency and stability of SDC, namely \fref{cor:conv_last} and \fref{theo:sdc_stab}, can be easily adapted for MLSDC. Analogous to SDC, it can be proven that the error at the last collocation node can be bounded by $\bigo(\dt^{\min(k_0+k, 2M)})$ and the increment function of MLSDC is Lipschitz continuous for $\dt$ small enough.
\end{remark}

Although \fref{theo:mlsdc_conv_gen} is the first general convergence theorem for MLSDC, its statement is rather disappointing: It merely establishes that MLSDC converges as least as fast as SDC, although more work is done per iteration.


A deeper look into the proof of the theorem gives an idea on the cause for the rather unexpected low convergence order. In particular, it is the estimate leading to \fref{equ:u-half} which is responsible for this issue. This equation implies that 
\begin{align*}
    \norm{U_h - U_h^{(k+\half)}} \leq C \norm{U_h - U_h^{(k)}},
\end{align*} 
which essentially means that the additional iteration on the coarse level does not gain any additional order in $\Delta t$ compared to the previously computed iterated on the fine level $U_h^{(k)}$. More specifically, it is the estimation 
\begin{align*}
    \infnorm{(I-I_H^h I_h^H)(U_h - U_h^{(k)})} \leq C \infnorm{U_h - U_h^{(k)}},
\end{align*}
in equation \eqref{equ:u-half} which leads to this result.


		
Thus, a possibly superior behavior of MLSDC seems to depend on the magnitude of $\infnorm{(I - I_H^h I_h^H)e_h}$ with $e_h = U_h - U_h^{(k)} \in \mathbb{C}^{MN}$ which describes the difference between an original vector and the one which results from restricting and interpolating it. Consequently, the term can be interpreted as the quality of the approximation on the coarse level or the accuracy loss it causes, respectively. In the following, this term will be examined in detail, resulting in a new theorem for the convergence of MLSDC with a higher convergence order but additional assumptions which have to be met. 

\subsection{An improved convergence result}\label{ssec:improved_mlsdc}

For this purpose, we will mainly focus on a specific coarsening strategy here, in particular coarsening in space. The differences occurring from coarsening in time will be discussed at the end of this section. Moreover, we will focus on particular methods used for the transfer operators. For $I_H^h$ we consider a piece-wise Lagrange interpolation of order $p$. This means that instead of using all $N_H$ available values to approximate the value at a particular point $x_i \in \Omega_h$, only its $p$ neighbors are taken into account for this purpose. Hence, $I_H^h$ corresponds to the application of a $p$-th order Lagrange interpolation for each point. For the restriction operator $I_h^H$, on the other hand, we consider simple injection. Thereby, we can mainly focus on the interpolation order and disregard the restriction order.

The following lemma now provides an appropriate estimation for the considered term $\infnorm{(I - I_H^h I_h^H)e_h}$.

\begin{lemma}
	\label{lem:ihh_gen}
    Let $E \coloneqq (E_m)_{1 \leq m \leq M}$ denote the remainder of the truncated inverse discrete Fourier transformation of $U_h - U_h^{(k)}$, i.e.
    \begin{align*}
        E_m \coloneqq \sum_{\ell=N_0}^{N-1} \abs{c_{m,\ell}},\quad m = 1, \dots, M.
    \end{align*}
    for some cutoff index $N_0 \le N$ and $c_{m,\ell}\in\mathbb{C}$ being the Fourier coefficients.
    Then, the following estimate for this error is valid:
    \begin{align*}
        \infnorm{(I - I_H^h I_h^H)(U_h - U_h^{(k)})} \leq (C_{11} \Delta x^p + C_{12}(E)) \infnorm{U_h - U_h^{(k)}},
    \end{align*}
    where $I \equiv I_{MN}$ denotes the identity matrix of size $MN$, $I_H^h$ is the piece-wise spatial Lagrange interpolation of order $p$ and $I_h^H$ denotes the injection operator. Furthermore, $\Delta x \equiv \Delta x_H$ is defined as the resolution in space on the coarse level $\Omega_H$ of MLSDC.
\end{lemma}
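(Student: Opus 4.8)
The plan is to use that the transfer operators $I_H^h$ and $I_h^H$ act only in space, so $I - I_H^h I_h^H$ decouples over the collocation nodes. Writing $e_h \coloneqq U_h - U_h^{(k)}$ and letting $e_{h,m}\in\compl^{N}$ be its spatial part at node $\tau_m$, we have $\infnorm{(I - I_H^h I_h^H)e_h} = \max_{1\le m\le M}\infnorm{(I_N - S)e_{h,m}}$, where $S$ is the spatial map ``injection onto $\Omega_H$, followed by the piece-wise $p$-th order Lagrange interpolation back to $\Omega_h$''. For fixed $m$ I would then expand $e_{h,m}$ in the discrete Fourier basis, $e_{h,m} = \sum_{\ell=0}^{N-1} c_{m,\ell}\,\varphi_\ell$ with $\varphi_\ell$ the fine-grid sampling of $x\mapsto e^{2\pi\mathrm{i}\ell x/L}$ on the periodic domain of length $L \coloneqq N\Delta x$ (so every $\varphi_\ell$ has unit modulus entrywise), and split $e_{h,m} = e_{h,m}^{\mathrm{lo}} + e_{h,m}^{\mathrm{hi}}$ into the contributions of the modes $\ell < N_0$ and $\ell \ge N_0$. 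By linearity of $I_N - S$ it suffices to treat the two pieces separately.

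For the high-frequency part I would use only boundedness of the transfer operators: $\infnorm{I_N - S} \le 1 + \Lambda_p \eqqcolon \tilde{C}_1$, since injection has unit $\infty$-norm and the $p$-point Lagrange interpolation has Lebesgue constant $\Lambda_p$ depending on $p$ and the stencil but not on $\Delta x$; hence $\infnorm{(I_N - S)e_{h,m}^{\mathrm{hi}}} \le \tilde{C}_1\,\infnorm{e_{h,m}^{\mathrm{hi}}} \le \tilde{C}_1\sum_{\ell=N_0}^{N-1}\abs{c_{m,\ell}} = \tilde{C}_1 E_m$. For the low-frequency part I would instead identify $I_N - S$ with the classical piece-wise interpolation remainder: $e_{h,m}^{\mathrm{lo}}$ is the fine-grid sampling of the band-limited trigonometric polynomial $g_m(x) \coloneqq \sum_{\ell<N_0} c_{m,\ell}\,e^{2\pi\mathrm{i}\ell x/L}$, so the $i$-th entry of $(I_N - S)e_{h,m}^{\mathrm{lo}}$ is $g_m(x_i)$ minus the value at $x_i$ of the piece-wise $p$-th order Lagrange interpolant of $g_m$ on $\Omega_H$, which the standard error formula bounds by $\hat{C}\,\Delta x^{p}\sup_{x\in[0,L]}\abs{g_m^{(p)}(x)}$. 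Finally, Bernstein's inequality for trigonometric polynomials of degree $<N_0$ gives $\sup_{[0,L]}\abs{g_m^{(p)}} \le (2\pi N_0/L)^{p}\sup_{[0,L]}\abs{g_m}$, and $\sup_{[0,L]}\abs{g_m}$ is controlled by the grid values $\infnorm{e_{h,m}^{\mathrm{lo}}}$ up to the Lebesgue constant of trigonometric interpolation (at most logarithmic in $N_0$, hence $\Delta x$-independent); collecting, $\infnorm{(I_N - S)e_{h,m}^{\mathrm{lo}}} \le C_{11}\Delta x^{p}\,\infnorm{e_{h,m}^{\mathrm{lo}}}$ with $C_{11}$ independent of $\Delta x$ (but of order $N_0^{p}$).

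Combining the two estimates, using $\infnorm{e_{h,m}^{\mathrm{lo}}} \le \infnorm{e_{h,m}} + E_m$ and taking the maximum over $m$, I get $\infnorm{(I - I_H^h I_h^H)e_h} \le C_{11}\Delta x^{p}\infnorm{e_h} + (C_{11}\Delta x^{p} + \tilde{C}_1)\infnorm{E}$; pulling $\infnorm{e_h}$ out of the last summand and setting $C_{12}(E) \coloneqq (C_{11}\Delta x^{p} + \tilde{C}_1)\,\infnorm{E}/\infnorm{e_h}$, a quantity that vanishes as $E\to 0$, yields the claimed bound. The hard part will be the low-mode step: one must carefully pass from the discrete operator $I_N - S$ to the continuous piece-wise Lagrange remainder and then trade the $p$-th derivative of the band-limited part for $N_0^{p}$ times its sup norm via Bernstein, keeping $N_0$ fixed while $\Delta x\to 0$ (otherwise $N_0\Delta x$ need not be small) --- which is exactly the resolution-versus-accuracy trade-off the lemma is meant to make explicit. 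The high-frequency estimate and the final bookkeeping are routine.
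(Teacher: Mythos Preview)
Your argument is correct and reaches the same conclusion as the paper by a genuinely different route. The paper applies the Lagrange remainder to the \emph{full} continuous iDFT extension $\tilde{e}_m$, splits the resulting derivative bound $\sum_\ell |c_{m,\ell}|\,\ell^p$ at $N_0$, and then --- to relate $\sum_{\ell<N_0}|c_{m,\ell}|$ back to $\infnorm{e_h}$ --- passes through the $2$-norm via Cauchy--Schwarz and Parseval before converting back to the maximum norm; the high-frequency contribution is handled by observing that $N^p\Delta x^p$ collapses to the constant $S^p(\Delta x_H/\Delta x_h)^p$. You instead split $e_{h,m}$ into low and high modes \emph{before} invoking the Lagrange remainder, dispose of the high part by the crude operator bound $\infnorm{I_N-S} \le 1+\Lambda_p$, and apply the interpolation estimate only to the band-limited $g_m$, replacing the Parseval detour with Bernstein's inequality plus the Lebesgue constant of trigonometric interpolation and thus staying in the $\infty$-norm throughout. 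Your route makes the mechanism (smooth part: $O(\Delta x^p)$-accurate interpolation; rough part: merely bounded, contributing $O(E_m)$) more transparent and delivers the cleaner intermediate form $C_{11}\Delta x^p\infnorm{e_h} + C'\infnorm{E}$; the paper's route gives more explicit constants and sidesteps the mildly delicate sampling step of bounding $\sup|g_m|$ by the grid values $\infnorm{e_{h,m}^{\mathrm{lo}}}$. One notational slip: your $L \coloneqq N\Delta x$ should use the fine spacing $\Delta x_h$, not $\Delta x\equiv\Delta x_H$.
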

\begin{proof}
	\renewcommand{\infnorm}[1]{\lVert#1\rVert_\infty}
	First of all, we need to introduce some definitions. 
	For ease of notation, the considered error vector $U_h - U_h^{(k)}$ will be denoted by $e_h$ within the proof. 
	In detail, the following definition is used:
    \begin{align*}
        e_{m,n} = e_{h,n}(\tau_m) = u_{h,n}(\tau_m)- u_{h,n}^{(k)}(\tau_m), \quad \forall m=1,\dots,M,\; n=1,\dots,N,
    \end{align*}
    where $m$ denotes the temporal index identifying the particular collocation node $\tau_m$ and $n$ represents the spatial index referring to some discretized points $(x_n)_{1 \leq n \leq N}$ within the considered interval in space $[0,S]$. Additionally, we assume the spatial steps to be equidistant here.
    
    Another definition needed for the proof is $g_{m,p}(x)$. It denotes the Lagrangian interpolation polynomial of order $p$ for the restricted vector $I_h^H e_h$ for each point in time $\tau_m$, $m=1, \dots, M$. As the two levels $\Omega_h$ and $\Omega_H$ differ in their spatial resolution, the transfer operators are applied at the spatial axis and thus can be considered separately for each component $e_h(\tau_m)$. More specifically, the restriction operator, corresponding to simple injection according to the assumptions, omits several values of $e_h(\tau_m) \in \compl^N$ resulting in $(I_h^H e_h)(\tau_m) \in \compl^{N_H}$, where $N_H$ denotes the number of degrees of freedom at the coarse level. The subsequent application of the interpolation operator at this vector then leads to the $M$ interpolating polynomials $(g_{m,p}(x))_{1 \leq m \leq M}$ with $p$ referring to their order of accuracy.
    
    Having introduced these notations, the considered term can be written as
    \begin{align*}
    (I - I_H^h I_h^H) (U_h - U_h^{(k)}) = (I - I_H^h I_h^H) e_h = \left(e_{m,n} - g_{m,p}(x_n)\right)_{\substack{1 \leq m \leq M,\\1 \leq n \leq N}},
    \end{align*}
    so that we can focus on $\abs{e_{m,n} - g_{m,p}(x_n)}$.
    
    Since $g_{m,p}(x)$ partially interpolates the points $(e_{m,n})_{1 \leq n \leq N}$, it seems very reasonable to use the general error estimation of Lagrangian interpolation to determine an estimate for the considered term. However, there is a crucial issue: The corresponding error bound, generally defined in \cite{bartels,interp-err} by
    \begin{align}
        \label{equ:interpol_error}
        \max_{x \in [a,b]} \abs{f(x) - g_p(x)} \leq \frac{\Delta x^p}{4p} \abs{f^{(p)} (\xi)}, \quad \xi \in [a,b]
    \end{align}
    apparently depends on the function $f$ from which the interpolation points are obtained. In our case, namely $\abs{e_{m,n} - g_{m,p}(x_n)}$, we do not have a specific function directly available to which the error components $e_{m,n}$ correspond. 
    
    To fill this gap, we will now derive an appropriate function for this purpose, using a continuous extension of the inverse Discrete Fourier Transformation (iDFT).        
    The iDFT at the spatial axes for the points $e_{m,n}$ is given by
    \begin{align*}
        e_{m,n} = \frac{1}{\sqrt{N}} \sum_{\ell=0}^{N-1} c_{m,\ell} \exp{i \frac{2\pi}{N} (n-1) \ell}, \quad m=1, \dots, M, \quad n=1, \dots, N
    \end{align*}
    with $c_{m,\ell}$ as Fourier coefficients and $i$ symbolizing the imaginary unit \cite{fourier}. 
    
    A continuous extension $\tilde{e}_m(x), x \in [0,S]$, on the whole spatial interval can then be derived by enforcing $\tilde{e}_m(x_n) \diff{\ =\ } e_{m,n}$ for all $m$ and $n$.
    With the transformation $x_n = \frac{S}{N} (n-1)$, i.e.\ $n = \frac{N}{S} x_n +1$, implied by an equidistant spatial discretization, it follows
    \begin{align*}
        \tilde{e}_m(x_n) &= \frac{1}{\sqrt{N}} \sum_{\ell=0}^{N-1} c_{m,\ell} \exp{i \frac{2\pi}{S}\ell x_n}
    \end{align*}
    and hence
    \begin{align*}
        \tilde{e}_m(x) \coloneqq \frac{1}{\sqrt{N}} \sum_{\ell=0}^{N-1} c_{m,\ell} \exp{i \frac{2\pi}{S} \ell x}, \quad x \in [0, S], \quad m=1, \dots, M.
    \end{align*}
    
    Consequently, we have found a function describing the points $e_{m,n}$. Thus, an interpretation of $g_{m,p}(x)$ as interpolation polynomial of $p$ points stemming from $\tilde{e}_m(x)$ is possible and the error estimation for Lagrange interpolation, presented in \fref{equ:interpol_error}, can now be applied. As a result, we get
    \begin{align}
        \abs{e_{m,n} - g_{m,p}(x_n)} = \abs{\tilde{e}_m(x_n) - g_{m,p}(x_n)} &\leq \frac{\Delta x^p}{4p} \abs{\tilde{e}_m^{(p)} (\xi)}
    \end{align} 
    with $\xi \in [0,S]$.
    
    With the insertion of the definition of $\tilde{e}_m(x)$ and its $p$th derivative, it follows
    \begin{align*}
    	\abs{\tilde{e}_m(x_n) - g_{m,p}(x_n)} &\leq \frac{\Delta x^p}{4p} \abs*{\frac{1}{\sqrt{N}} \sum_{\ell=0}^{N-1} c_{m,\ell} \left(i \frac{2\pi}{S}\ell \right)^p \exp{i \frac{2\pi}{S}\ell \xi}}\\
    	&= \frac{1}{\sqrt{N}} \frac{\Delta x^p}{4p} \left(\frac{2\pi}{S}\right)^p \abs*{\sum_{\ell=0}^{N-1} c_{m,\ell} \ell^p \exp{i \frac{2\pi}{S}\ell \xi}},
    \end{align*}
    so that with $\tilde{C}(p) \coloneqq  \frac{1}{4p} \left(\frac{2\pi}{S}\right)^p$ we have
    \begin{align*}
    	\abs{\tilde{e}_m(x_n) - g_{m,p}(x_n)} &\leq \frac{1}{\sqrt{N}} \tilde{C}(p) \Delta x^p \sum_{\ell=0}^{N-1} \abs*{c_{m,\ell} \ell^p \exp{i \frac{2\pi}{S}\ell \xi}}\\
    	&\leq \frac{1}{\sqrt{N}} \tilde{C}(p) \Delta x^p \sum_{\ell=0}^{N-1} \abs{c_{m,\ell}} \ell^p.
    \end{align*}
    We now choose $N_0\le N$ such that 
    \begin{align*}
        \sum_{\ell=0}^{N_0-1} \abs{c_{m,\ell}} \geq \epsilon_m > 0 \quad \forall m
    \end{align*}
    for given $\epsilon_m$. Note that this is not possible if there exists an $m$ for which the coefficients $(c_{m,\ell})_{\ell = 0, \dots, N-1}$ are all $0$. This would imply that for this particular $m$ the error $e_{m,n}$ equals to $0$ for all $n = 1, \dots, N$. However, the respective error $e_h(\tau_m)$ would then be $0$ and could just be disregarded in our particular context as it does not have an impact on the considered maximum norm. Therefore, the assumption does not lead to a loss of generality. Finally, we define the remainders of the sums as
    \begin{align*}
        E_m \coloneqq \sum_{\ell=N_0}^{N-1} \abs{c_{m,\ell}},\quad m = 1, \dots, M.
    \end{align*}
    
    Now, the sum in the previous estimation will be split at $N_0$ resulting in the following estimate:
    \begin{align*}
        \abs{\tilde{e}_m(x_n) - g_{m,p}(x_n)} &\leq \frac{1}{\sqrt{N}} \tilde{C}(p) \Delta x^p \left( \sum_{\ell=0}^{N_0-1} \abs{c_{m,\ell}} \ell^p + \sum_{\ell=N_0}^{N-1} \abs{c_{m,\ell}}  \ell^p \right).
    \end{align*}
    With the simple estimation $\ell \leq N_0$ for the first sum and $\ell \leq N$ for the second one, the following formulation using the definition of the remainder $E_m$ is obtained:
    \begin{align*}
    	\abs{\tilde{e}_m(x_n) - g_{m,p}(x_n)} &\leq \frac{1}{\sqrt{N}} \tilde{C}(p) \Delta x^p \left( N_0^p \sum_{\ell=0}^{N_0-1} \abs{c_{m,\ell}} + N^p \sum_{\ell=N_0}^{N-1} \abs{c_{m,\ell}} \right)\\
    	&= \frac{1}{\sqrt{N}} \tilde{C}(p) \Delta x^p \left( N_0^p \sum_{\ell=0}^{N_0-1} \abs{c_{m,\ell}} + N^p E_m \right).
    \end{align*}
    
    Now, we will have a look at the norm of the whole vector $(I - I_H^h I_h^H)e_h$, but instead of the maximum norm, we first consider the squared 2-norm given by
    \begin{align*}
    	\norm{(I - I_H^h I_h^H) e_h}_2^2 &= \sum_{n=1}^N \sum_{m=1}^M \abs{\tilde{e}_m(x_n) - g_{m,p}(x_n)}^2.
    \end{align*}
	By the insertion of the previous estimation, it follows
    \begin{align*}
    	\norm{(I - I_H^h I_h^H) e_h}_2^2 &\leq \sum_{n=1}^N \sum_{m=1}^M \left[ \frac{1}{\sqrt{N}} \tilde{C}(p) \Delta x^p \left( N_0^p \sum_{\ell=0}^{N_0-1} \abs{c_{m,\ell}} + N^p E_m \right) \right]^2.
    \end{align*}
    Since the summands are independent of the running index $n$, the equation can be simplified to
    \begin{align}
        \label{equ:tmp1}
    	\norm{(I - I_H^h I_h^H) e_h}_2^2 &\leq \tilde{C}(p)^2 \Delta x^{2p} \sum_{m=1}^M \left( N_0^p \sum_{\ell=0}^{N_0-1} \abs{c_{m,\ell}} + N^p E_m \right)^2.
    \end{align}
    Now, each inner summand can be written as
    \begin{align}
        \label{equ:tmp2}
        \left( N_0^p \sum_{\ell=0}^{N_0-1} \abs{c_{m,\ell}} + N^p E_m \right)^2 &= 
    	N_0^{2p} \left( \sum_{\ell=0}^{N_0-1} \abs{c_{m,\ell}} \right)^2 + 
    	2 N_0^p N^p E_m \sum_{\ell=0}^{N_0-1} \abs{c_{m,\ell}} \nonumber \\
    	&\quad+ N^{2p} E_m^2\\
    	&=: S_1 + S_2 + S_3.\nonumber
    \end{align}
    While $S_1$ is an intended component (it contains the squared sum of $c_{m,\ell}$ which we will need to get back to the norm of $e_h$), the other two summands $S_2$ and $S_3$ are inconvenient. Therefore, we will now eliminate them by searching a $T(E_m)$ such that
    \begin{align}\begin{split}
        \label{equ:tmp3}
		S_1 + S_2 + S_3 &\leq S_1 + T(E_m) \left( \sum_{\ell=0}^{N_0-1} \abs{c_{m,\ell}} \right)^2\\
		&= \left( N_0^{2p} + T(E_m) \right) \left( \sum_{\ell=0}^{N_0-1} \abs{c_{m,\ell}} \right)^2.
    \end{split}\end{align}
    This is true if 
    \begin{align*}
		S_2 + S_3 - T(E_m) \left( \sum_{\ell=0}^{N_0-1} \abs{c_{m,\ell}} \right)^2 \leq 0,
	\end{align*}
	which in turn leads to 
	\begin{align*}		
        T(E_m) \geq \frac{2 N_0^p N^p}{\sum_{\ell=0}^{N_0-1} \abs{c_{m,\ell}}} E_m + \frac{N^{2p}}{\left( \sum_{\ell=0}^{N_0-1} \abs{c_{m,\ell}} \right)^2} E_m^2,
    \end{align*}
    after using the definitions of $S_2$ and $S_3$.
    Thus, for 
    \begin{align}
        \label{equ:def_tem}
    	T(E_m) \coloneqq \frac{2 N_0^p N^p}{\epsilon_m} E_m + \frac{N^{2p}}{\epsilon_m^2} E_m^2
    \end{align}
	we can bound 
    \begin{align*}
    	S_1 + S_2 + S_3 \le \left( N_0^{2p} + T(E_m) \right) \left( \sum_{\ell=0}^{N_0-1} \abs{c_{m,\ell}} \right)^2.
    \end{align*}
    Using the Cauchy-Schwarz inequality we have
    \begin{align*}
        \left( \sum_{\ell=0}^{N_0-1} \abs{c_{m,\ell}} \right) ^2 = \left( \sum_{\ell=0}^{N_0-1} \abs{c_{m,\ell}} \cdot 1 \right) ^2 \leq \sum_{\ell=0}^{N_0-1} \abs{c_{m,\ell}}^2 \cdot \sum_{\ell=0}^{N_0-1} 1^2 = N_0 \sum_{\ell=0}^{N_0-1} \diff{\abs{c_{m,\ell}}^2},
    \end{align*}
    so that with \eqref{equ:tmp1}, \eqref{equ:tmp2} and \eqref{equ:tmp3} we get
    \begin{align*}
    	\norm{(I - I_H^h I_h^H) e_h}_2^2 &\leq \tilde{C}(p)^2 \Delta x^{2p} N_0 \sum_{m=1}^M \left( N_0^{2p} + T(E_m) \right) \sum_{\ell=0}^{N_0-1} \diff{\abs{c_{m,\ell}}^2}.
    \end{align*}
    
    Further, it follows from Parseval's theorem~\cite{fourier} that
    \begin{align*}
        \sum_{\ell=0}^{N_0-1} \diff{\abs{c_{m,\ell}}^2} \leq \sum_{\ell=0}^{N-1} \diff{\abs{c_{m,\ell}}^2} = \sum_{n=1}^{N} \diff{\abs{e_{m,n}}^2}
    \end{align*}
    and thus
    \begin{align*}
    	\norm{(I - I_H^h I_h^H) e_h}_2^2 &\leq \tilde{C}(p)^2 \Delta x^{2p} N_0 \sum_{m=1}^M \left( N_0^{2p} + T(E_m) \right) \sum_{n=1}^{N} \diff{\abs{e_{m,n}}^2}\\
    	&= \tilde{C}(p)^2 \Delta x^{2p} N_0 \left( N_0^{2p} + \max_{m=1,\dots,M} T(E_m) \right) \norm{e_h}_2^2.
    \end{align*}
    Since it is the maximum norm we are interested in and not the Euclidean one, an appropriate transformation is required, given by
    \begin{align*}
        \infnorm{x} \leq \norm{x}_2 \leq \sqrt{n} \infnorm{x} \quad \forall x \in \compl^n,
    \end{align*}
    so that
    \begin{align*}
    	\infnorm{(I - I_H^h I_h^H) e_h} &\leq \norm{(I - I_H^h I_h^H) e_h}_2\\
    	&\leq \tilde{C}(p) \Delta x^{p} \sqrt{N_0 \left( N_0^{2p} + \max_{m=1,\dots,M} T(E_m) \right)} \norm{e_h}_2\\
    	&\leq  \tilde{C}(p) \Delta x^{p} \sqrt{N_0MN} \sqrt{N_0^{2p} + \max_{m=1,\dots,M} T(E_m)} \infnorm{e_h}.
    \end{align*}
    Using the triangle inequality for square roots, the sum can be split as
    \begin{align*}
    	\infnorm{(I - I_H^h I_h^H) e_h} &\leq \tilde{C}(p) \Delta x^{p} \sqrt{N_0MN} \left( N_0^{p} +  \sqrt{\max_{m=1,\dots,M} T(E_m)} \right) \infnorm{e_h}. 
    \end{align*}
    With the insertion of the definition of $T(E_m)$ presented in \fref{equ:def_tem}, we get
    \begin{align}\begin{split}
        \label{equ:first_sight}
    	\infnorm{(I - I_H^h I_h^H) e_h} &\leq \tilde{C}(p) \sqrt{N_0MN} N_0^{p} \Delta x^{p}\\
        &\quad \cdot \left( N_0^p + \sqrt{\max_{m=1,\dots,M} \frac{2 N_0^p N^p}{\epsilon_m} E_m + \frac{N^{2p}}{\epsilon_m^2} E_m^2} \right) \infnorm{e_h}.
    \end{split}\end{align}

    At the first sight, it looks like $\Delta x^p$ is the dominating term in this equation. However, a closer look reveals that the root term is in $\bigo(N^p)$ which shifts the dominance of this summand towards the remainder $E_m$. To see this, we first transform the root term by extracting $N^{2p}$:
    \begin{align*}
        \sqrt{\max_{m=1,\dots,M} \frac{2 N_0^p N^p}{\epsilon_m} E_m + \frac{N^{2p}}{\epsilon_m^2} E_m^2} &\leq 
        \sqrt{\max_{m=1,\dots,M} N^{2p} \left( \frac{2 N_0^p}{\epsilon_m} E_m + \frac{1}{\epsilon_m^2} E_m^2 \right)}\\
        &= N^p \sqrt{\max_{m=1,\dots,M} \frac{2 N_0^p}{\epsilon_m} E_m + \frac{1}{\epsilon_m^2} E_m^2}.
    \end{align*}        
    Then, we consider the definition of the step size on the fine level, namely $\Delta x_h = \frac{S}{N}$, which allows the representation of $N^p$ as $\left( \frac{S}{\Delta x_h} \right) ^p$. From this representation, it follows
    \begin{align*}
    	\Delta x^p  &\sqrt{\max_{m=1,\dots,M} \frac{2 N_0^p N^p}{\epsilon_m} E_m + \frac{N^{2p}}{\epsilon_m^2} E_m^2}\\&\quad \leq S^p \left( \frac{\Delta x_H}{\Delta x_h} \right)^{p} \sqrt{\max_{m=1,\dots,M} \frac{2 N_0^p}{\epsilon_m} E_m + \frac{1}{\epsilon_m^2} E_m^2}.
    \end{align*}
    The insertion of this estimation into \fref{equ:first_sight} finally leads to the overall result
    \begin{align*}
        &\infnorm{(I - I_H^h I_h^H) e_h} \leq \tilde{C}(p) \sqrt{N_0MN} N_0^{2p} \Delta x^{p} \infnorm{e_h}\\
        &\quad\quad\quad\quad\quad\quad + \frac{(2\pi)^p}{4p} \left( \frac{\Delta x_H}{\Delta x_h} \right)^{p} \left( \sqrt{\max_{m=1,\dots,M} \frac{2 N_0^p}{\epsilon_m} E_m + \frac{1}{\epsilon_m^2} E_m^2} \right) \infnorm{e_h},
    \end{align*}
    where we used the definition of $\tilde{C}(p)$ to eliminate $S^p$ in the second summand. 
    
    In this estimation, we can finally see that $\Delta x^p$ does not dominate the second summand anymore. It is replaced by the relation between the step size on the coarse and on the fine level which can be regarded as constant. As a result, the remainder $E_m$ is now the dominating item in the term. Hence, a formulation like
    \begin{align*}
    	\infnorm{(I - I_H^h I_h^H) e_h} &\leq C_{11} \Delta x^p \infnorm{e_h} + C_{12}(E) \infnorm{e_h}
    \end{align*}
    with $E \coloneqq (E_m)_{1 \leq m \leq M}$ is reasonable and concludes the proof.
\end{proof}

\begin{remark}
    The splitting of the sum and the consideration of the vector of remainders $E$ is needed since the constant $C_{11}$ would otherwise depend on $N^p$.
    Considering the relation $\frac{N}{S} = \frac{1}{\Delta x}$, this would mean that the approximated boundary would not depend on $\Delta x$ anymore. As a result, we would not obtain a better estimation than the simple one $\infnorm{(I-I_H^h I_h^H)(e_h)} \ \le\  C \infnorm{e_h}$, which was already used in the proof of \fref{theo:mlsdc_conv_gen}. By the applied split of the series, the term $N^p$ is replaced by $N_0^p$ which yields a more meaningful estimation as it keeps the dependence on the term $\Delta x^p$ while adding another on the smoothness of the error.
\end{remark}

\begin{remark}
    Note that ideally the error only has a few, low-frequency Fourier coefficients, i.e. $\tilde{e}_{m}(x)$ can be written as 
    \begin{align*}
        \tilde{e}_m(x) \coloneqq \frac{1}{\sqrt{N}} \sum_{\ell=0}^{N_0-1} c_{m,\ell} \exp{i \frac{2\pi}{S} \ell x}, \quad x \in [0, S], \quad m=1, \dots, M,
    \end{align*}
    using $N_0$ summands only.
    Then, $E_m = 0$ and the estimate
    \begin{align*}
    	\infnorm{(I - I_H^h I_h^H) e_h} &\leq C_{11} \Delta x^p \infnorm{e_h} + C_{12}(E) \infnorm{e_h}
    \end{align*}
    reduces to
    \begin{align*}
    	\infnorm{(I - I_H^h I_h^H) e_h} &\leq C_{11} \Delta x^p \infnorm{e_h}.
    \end{align*}
\end{remark}

The following theorem uses \fref{lem:ihh_gen} to extend \fref{theo:mlsdc_conv_gen}. In particular, the provided estimation for $\infnorm{(I - I_H^h I_h^H) (U_h - U_h^{(k)})}$ is used in the corresponding proof which results in a new convergence theorem for MLSDC.
    
\begin{theorem}
	\label{theo:mlsdc_conv_prec}
    Consider a generic initial value problem like (\ref{equ:ivp}) with a Lipschitz-continuous function $f$ on the right-hand side. Furthermore, let the conditions of \fref{lem:ihh_gen} be met.
    
    Then, if the step size $\Delta t$ is sufficiently small, MLSDC converges linearly to the solution of the collocation problem with a convergence factor in $\bigo((\Delta x^p + C(E)) \Delta t + \Delta t^2)$, i.e. the following estimate for the error is valid:
    \begin{align}
   	    \label{equ:mlsdc_conv_prec_iter}
        \infnorm{U_h - U_h^{(k)}} \leq ((C_{13} \Delta x^p + C_{14}(E)) \Delta t + C_{15} \Delta t^2) \infnorm{U_h - U_h^{(k-1)}},
    \end{align}
    where $\Delta x \equiv \Delta x_H$ is defined as the resolution in space on the coarse level $\Omega_H$ of MLSDC and the constants $C_{13}$, $C_{14}(E)$ and $C_{15}$ are independent of $\Delta t$.
   	
   	If, additionally, the solution of the initial value problem $u$ is $(M+1)$-times continuously differentiable, the LTE of MLSDC compared to the solution of the ODE can be bounded by:
   	\begin{align}\begin{split}
   	    \label{equ:mlsdc_conv_prec_lte}
		\infnorm{\bar{U_h} - U_h^{(k)}} &\leq C_{17} \Delta t^{M+1} \norm{u}_{M+1}\\
		&\quad+ \sum_{l=0}^{k} C_{l+18} (\Delta x^p + C_{14}(E))^{k-l} \Delta t^{k_0+k+l} \norm{u}_{k_0+1}
	\end{split}\end{align}
	where the constants $C_{17}, \dots, C_{k+18}$ are independent of $\Delta t$, $k_0$ denotes the approximation order of the initial guess $U_h^{(0)}$ and $\norm{u}_p$ is defined by $\infnorm{u^{(p)}}$.
\end{theorem}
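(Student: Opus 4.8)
The plan is to retrace the proof of \fref{theo:mlsdc_conv_gen} almost line for line, the single --- but decisive --- change being that the crude interpolation bound $\infnorm{(I-I_H^h I_h^H)(U_h-U_h^{(k)})}\le C\,\infnorm{U_h-U_h^{(k)}}$ used there is replaced by the sharper estimate of \fref{lem:ihh_gen}. As in that proof I work with $U_h^{(k+1)}$ rather than $U_h^{(k)}$. The final sweep of an MLSDC step, \fref{equ:mlsdc-alg3}, is an ordinary SDC sweep on $\Omega_h$ started from $U_h^{(k+\half)}$, so for $\Delta t$ small enough \fref{theo:sdc_conv} gives
\begin{align*}
\infnorm{U_h-U_h^{(k+1)}}\le C_4\,\Delta t\,\infnorm{U_h-U_h^{(k+\half)}}.
\end{align*}
Expanding the coarse-grid correction \fref{equ:mlsdc-alg2}, inserting the FAS identity $U_H=I_h^H U_h$, and using the triangle inequality exactly as in the steps leading to \fref{equ:u-half} gives
\begin{align*}
\infnorm{U_h-U_h^{(k+\half)}}\le\infnorm{(I-I_H^h I_h^H)(U_h-U_h^{(k)})}+\tilde C_1\,\infnorm{U_H-U_H^{(k+\half)}}.
\end{align*}

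For the second term I reuse verbatim the bound \fref{equ:u-half_H}, i.e.\ $\infnorm{U_H-U_H^{(k+\half)}}\le\tilde C_2\,\Delta t\,\infnorm{U_h-U_h^{(k)}}$, which rests only on \fref{lem:estimation} and the smallness of $\Delta t$; for the first term I now invoke \fref{lem:ihh_gen}, which bounds it by $(C_{11}\Delta x^p+C_{12}(E))\,\infnorm{U_h-U_h^{(k)}}$. Combining the three inequalities,
\begin{align*}
\infnorm{U_h-U_h^{(k+1)}}\le C_4\,\Delta t\bigl((C_{11}\Delta x^p+C_{12}(E))+\tilde C_3\,\Delta t\bigr)\infnorm{U_h-U_h^{(k)}},
\end{align*}
and, setting $C_{13}\coloneqq C_4C_{11}$, $C_{14}(E)\coloneqq C_4C_{12}(E)$, $C_{15}\coloneqq C_4\tilde C_3$ and shifting the iteration index, this is exactly \fref{equ:mlsdc_conv_prec_iter}. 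No restriction on $\Delta t$ beyond those already appearing in \fref{theo:mlsdc_conv_gen} is needed.

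For the truncation-error estimate \fref{equ:mlsdc_conv_prec_lte} I iterate the contraction just obtained. Writing $\beta\coloneqq C_{13}\Delta x^p+C_{14}(E)$, so that the one-step factor equals $\beta\,\Delta t+C_{15}\,\Delta t^2=(\beta+C_{15}\,\Delta t)\,\Delta t$, recursive insertion together with the binomial theorem gives
\begin{align*}
\infnorm{U_h-U_h^{(k)}}\le\Delta t^{k}\,(\beta+C_{15}\,\Delta t)^{k}\,\infnorm{U_h-U_h^{(0)}}=\sum_{l=0}^{k}\binom{k}{l}\,C_{15}^{l}\,\beta^{k-l}\,\Delta t^{k+l}\,\infnorm{U_h-U_h^{(0)}}.
\end{align*}
Since $\beta\le\max(C_{13},1)\,(\Delta x^p+C_{14}(E))$, the powers $\beta^{k-l}$ may be replaced by constant multiples of $(\Delta x^p+C_{14}(E))^{k-l}$, absorbing the binomial coefficients and the $C_{15}^{l}$ into new constants. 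Bounding the initial error by the triangle inequality, \fref{theo:coll_prob_err} and the $k_0$-th order accuracy of $U_h^{(0)}$, namely $\infnorm{U_h-U_h^{(0)}}\le C_1\Delta t^{M+1}\norm{u}_{M+1}+\tilde C_4\Delta t^{k_0}\norm{u}_{k_0+1}$, and multiplying out: every contribution carrying $\norm{u}_{M+1}$ appears with a factor $\Delta t^{M+1+k+l}$ times the bounded quantity $(\Delta x^p+C_{14}(E))^{k-l}$ and hence, for $\Delta t$ bounded above, collapses into a single $C\,\Delta t^{M+1}\norm{u}_{M+1}$ term, while the contributions carrying $\norm{u}_{k_0+1}$ are exactly $\sum_{l=0}^{k}C_{l+18}\,(\Delta x^p+C_{14}(E))^{k-l}\,\Delta t^{k_0+k+l}\,\norm{u}_{k_0+1}$. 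Adding $\infnorm{\bar{U_h}-U_h}\le C_1\Delta t^{M+1}\norm{u}_{M+1}$ from \fref{theo:coll_prob_err} and collecting all $\Delta t^{M+1}$-terms into $C_{17}$ gives \fref{equ:mlsdc_conv_prec_lte}.

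The step I expect to need the most care is not a conceptual obstacle but the binomial bookkeeping in this last part, which is precisely what makes the trade-off underlying the theorem visible: each extra power of $\Delta t$ produced by the $C_{15}\Delta t^{2}$ contribution to the one-step factor is bought at the cost of one fewer power of the smoothness factor $\Delta x^p+C_{14}(E)$. In particular the bound improves on \fref{theo:mlsdc_conv_gen} only when $\Delta x^p+C_{14}(E)$ is genuinely small; otherwise the $l=k$ summand $C_{k+18}\,\Delta t^{k_0+2k}\,\norm{u}_{k_0+1}$, which carries no smallness factor, dominates and one recovers the previous result. A secondary point requiring attention is that the remainder vector $E$ in \fref{lem:ihh_gen} is the one belonging to $U_h-U_h^{(k)}$ and thus formally depends on the iteration index; since only finitely many iterations are considered, one may simply replace $C_{14}(E)$ throughout by the maximum of the finitely many values that occur, which affects none of the stated orders.
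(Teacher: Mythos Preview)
Your proof is correct and follows the paper's approach exactly: reuse the three estimates \eqref{equ:u-1_rek_u-half}, \eqref{equ:u-half_Ihh}, \eqref{equ:u-half_H} from the proof of \fref{theo:mlsdc_conv_gen}, but replace the crude bound on $\infnorm{(I-I_H^hI_h^H)(U_h-U_h^{(k)})}$ by \fref{lem:ihh_gen}, then iterate and expand with the binomial theorem --- the paper's own proof is in fact terser than yours on the second part, merely pointing to \fref{theo:sdc_conv} and the binomial expansion.

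One slip in your closing commentary, not in the proof itself: when $\Delta x^p+C_{14}(E)$ is \emph{not} small it is the $l=0$ summand $(\Delta x^p+C_{14}(E))^{k}\,\Delta t^{k_0+k}$ that dominates and recovers the order of \fref{theo:mlsdc_conv_gen}, whereas the $l=k$ summand $\Delta t^{k_0+2k}$ is the one that becomes leading when the smoothness factor \emph{is} small --- you have these two cases swapped.
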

\begin{proof}
	The proof is similar to the one of \fref{theo:mlsdc_conv_gen} but differs in the used estimation for $\infnorm{(I - I_H^h I_h^H) (U_h - U_h^{(k)})}$. Here, \fref{lem:ihh_gen} instead of the simple norm compatibility inequality is used for this purpose. Based on the estimations (\ref{equ:u-1_rek_u-half}), (\ref{equ:u-half_Ihh}) and (\ref{equ:u-half_H}), arising in the proof of the mentioned theorem, it follows
	\begin{align*}
    	\infnorm{U_h - U_h^{(k+1)}} \leq \tilde{C}_1 \Delta t \infnorm{(I - I_H^h I_h^H) (U_h - U_h^{(k)})} + C_{15} \Delta t^2 \infnorm{U_h - U_h^{(k)}}.
	\end{align*}
    As already mentioned, we will now apply \fref{lem:ihh_gen}, namely
    \begin{align*}
        \infnorm{(I - I_H^h I_h^H)(U_h - U_h^{(k)})} \leq (C_{11} \Delta x^p + C_{12}(E)) \infnorm{U_h - U_h^{(k)}},
    \end{align*}
    which yields
    \begin{align*}
    	\infnorm{U_h - U_h^{(k+1)}} &\leq (C_{13} \Delta x^p + C_{14}(E)) \Delta t \infnorm{U_h - U_h^{(k)}}\\
    	&\quad + C_{15} \Delta t^2 \infnorm{U_h - U_h^{(k)}}\\
    	&= ((C_{13} \Delta x^p + C_{14}(E)) \Delta t + C_{15} \Delta t^2) \infnorm{U_h - U_h^{k}}.
    \end{align*}
    This concludes the proof of \fref{equ:mlsdc_conv_prec_iter}.
    
	The proof of \fref{equ:mlsdc_conv_prec_lte} is again similar to the one of the second equation in \fref{theo:sdc_conv}, using the previous result. Additionally, the binomial theorem is applied to simplify the arising term $((C_{13} \Delta x^p + C_{14}(E)) \Delta t + C_{15} \Delta t^2)^k$.
\end{proof}

\begin{remark}
    It is here that a possible dependency of $f$'s Lipschitz constant on $\Delta x$ plays a key role.
    Similar to the observations before, we find that in this case \fref{equ:mlsdc_conv_prec_iter} needs to be replaced with
    \begin{align*}
        \infnorm{U_h - U_h^{(k)}} \leq ((C_{13} \Delta x^{p} + C_{14}&(E))C(\delta^{-1}) \Delta t \\&+ C_{15}(\delta^{-2}) \Delta t^2) \infnorm{U_h - U_h^{(k-1)}},
    \end{align*}
    where $\delta$ denotes the difference between spatial and temporal resolution (up to constants) and comes from the initial step size restriction of SDC.
    The term $C_{13} \Delta x^{p} + C_{14}(E)$ itself does not depend on $\delta$, since it comes from the remainder of the interpolation estimate in lemma~\ref{lem:ihh_gen}, where $f$ and therefore its Lipschitz constant as well as the step size does not play a role.
    As before, \fref{equ:mlsdc_conv_prec_lte} has to be modified, now including the term $\delta^{-(k-l+1)}$ in the sum. 
    The constant $C_{17}$ is still independent of $\delta$.
\end{remark}

\begin{remark}
	Similar to \fref{cor:conv_last}, it can be proven that the order limit $M+1$ in \fref{theo:mlsdc_conv_prec} can be replaced by $2M$ if only the error at the last collocation node is considered.
\end{remark}

The theorem states that, under the named conditions, MLSDC converges linearly with a convergence rate of $\bigo((\Delta x^p + C(E)) \Delta t + \Delta t^2)$ to the collocation solution if $(C_{13} \Delta x^p + C_{14}(E)) \Delta t + C_{15} \Delta t^2 < 1$. This means that if $\Delta x^p$ and the vector of remainders $E$ are sufficiently small, the error of MLSDC decreases by two orders of $\Delta t$ with each iteration, which indeed represents an improved convergence behavior compared to the one described in \fref{theo:mlsdc_conv_gen}. Otherwise, i.e.\ if $\Delta x^p$ and $E$ are not that small, it only decreases by one order in $\Delta t$ which is equivalent to the result of the previous theorem. 

In the second equation of the theorem, it can be seen that again, $\Delta x^p$ and $E$ are the crucial factors here. If they are small enough such that $\Delta t^{k_0 + 2k}$ is the leading order, MLSDC converges with order $\min(k_0+2k-1, 2M-1)$ and thus gains two orders per iteration. Otherwise, the convergence order is only $\min(k_0+k-1, 2M-1)$, i.e.\ the error decreases by one order in $\Delta t$ in each iteration. 

Note that, as a result, it is advisable to use a high interpolation order $p$ and a small spatial step size $\dx$ on the coarse level in practical applications of MLSDC. This theoretical result matches the numerical observations described in \cite{mlsdc-2}. In section 2.2.5 of this paper, it is mentioned that the convergence properties of MLSDC seem to be highly dependent on the used interpolation order and resolution in space. Moreover, it was said that in the considered numerical examples a high resolution in space, i.e.\ a small $\Delta x$, led to a lower sensitivity on the interpolation order $p$. Our theoretical investigation provides an explanation for this behavior.

It seems reasonable to use a similar approach to determine the conditions for a higher convergence order of MLSDC if coarsening in time instead of space is used. Analogous to \fref{equ:interpol_error} in the proof of \fref{lem:ihh_gen}, the Lagrangian error estimation could be used for this purpose, resulting in the following estimation
\begin{align}
    \label{equ:err_coarse_time}
    \infnorm{(I- I_H^h I_h^H)(U_h - U_H^{(k)})} \leq \frac{\Delta \tau^p}{4p} \infnorm{e^{(p)}(t)},
\end{align}
where $I_H^h$ and $I_h^H$ denote temporal transfer operators now and $e(t)$ is defined as the continuous error of MLSDC compared to the collocation solution. In this case, however, the function $e(t)$ is implicitly known and thus does not have to be approximated by an iDFT. In particular, it is a polynomial of degree $M \equiv M_h$ as both the collocation solution $U$ and each iterate $U_h^{(k)}$ of MLSDC are polynomials of that degree, respectively. This can be seen by considering that $\dt Q F(U)$ as well as $\dt Q_\Delta F(U)$ essentially represent a sum of integrals of Lagrange polynomials which apparently results in a polynomial. Consequently, the $p$-th derivative of $e(t)$ is a polynomial of degree $M_h-p$. The maximal interpolation order $p$ is the number of collocation nodes $M_H$ on the coarse level. Here, we will assume that $p = M_H$, i.e. the maximal interpolation order is used. Note that $e^{(p)}(t) = 0$ for $p > M_h$ and hence $\infnorm{(I- I_H^h I_h^H)(U_h - U_H^{(k)})} = 0$ for $M_H = M_h$ which is consistent with the expected behavior as it means that no coarsening is used at all.
As a conclusion, it can be said that, according to \fref{equ:err_coarse_time} the improved convergence behavior of MLSDC using coarsening in time is dependent on the used time step size $\Delta \tau = C \dt$ and the number of collocation nodes on the coarse level $M_H$. Note that it is also dependent on the specific coefficients of $e^{(p)}(t)$. However, these are highly dependent on the right-hand side $f$ of the IVP and thus cannot be controlled by any method parameters.

In summary, two convergence theorems for MLSDC were established in this section. While the first one, \fref{theo:mlsdc_conv_gen}, represents a general statement on the convergence of the method, the second one, \fref{theo:mlsdc_conv_prec}, provides theoretically established guidelines for the parameter choice in practical applications of MLSDC in order to achieve an improved convergence behavior of the method. In the next section, we will examine numerical examples of MLSDC to check if the resulting errors match those theoretical predictions. 

\section{Numerical Results}
\label{sec:examples}


In this section, the convergence behavior of MLSDC, theoretically analyzed in the previous section, is verified by numerical examples. The method is applied to three different initial value problems and the results are compared to those from classical, single-level SDC.
The key question here is whether the conditions derived in the previous sections (smoothness, high spatial/temporal resolution and high interpolation order) are actually sharp, i.e.\ whether MLSDC does indeed show only low order convergence if any of these conditions are violated. 
The corresponding programs were written in Python using the \texttt{pySDC} code \cite{pysdc,10.1145/3310410}.




\subsection{Heat equation}

The first numerical example is the one-dimensional heat equation defined by the following initial value problem:
\begin{equation}\begin{gathered}
    \label{equ:heat_ode}
    \frac{\partial}{\partial t} u(x,t) = \nu \frac{\partial^2}{\partial x^2} u(x,t), \quad \forall t \in [0,t_{end}], x \in [0,1]\\
    u(0,t) = 0, \quad u(1,t) = 0,\\
    u(x,0) = u_0(x),
\end{gathered}\end{equation}
where $u(x,t)$ represents the temperature at the location $x$ and time $t$ and $\nu > 0$ defines the thermal diffusivity of the medium.
This partial differential equation is discretized in space using standard second-order finite differences with $N$ degrees-of-freedom.


As initial value a sine wave with frequency $\kappa$ is selected, i.e.\ $u_0(x) = \sin(\kappa \pi x)$. Under these conditions, the analytical solution of the spatially discretized initial value problem is given by
\begin{align*}
	\vec{u}(t) = \sin(\kappa \pi \vec{x})e^{-t \nu \rho}\text{ with } \rho = \frac{1}{\Delta x^2} (2 - 2\cos(\pi \nu \vec{x}))
\end{align*}
with $\vec{x} \coloneqq (x_n)_{1 \leq n \leq N}$ and an element-wise application of the trigonometric functions.
For the tests, we choose $\kappa=4$, $\nu = 0.1$ and $M=5$ Gauß-Radau collocation nodes.

Note that we are using this linear ODE for our tests even though the linearity of the right-hand side $f$ is not a necessary condition in theorems \ref{theo:sdc_conv}, \ref{theo:mlsdc_conv_gen} or \ref{theo:mlsdc_conv_prec}. In fact, $f$ is just assumed to be Lipschitz continuous. However, we will consider the heat equation here since it is well studied and has a convenient exact solution needed to compute the errors of SDC and MLSDC. 

The following tests are structured in a particular way: In the first one, we will adjust the method parameters according to the results of \fref{theo:mlsdc_conv_prec} to observe an improved convergence of MLSDC over SDC. More specifically, we will use a small spatial step size $\Delta x$, a high interpolation order $p$ and try to generate smooth errors using smooth initial guesses for the iteration. In a second step, we will then subsequently change these parameters leading to a lower convergence order of MLSDC as described in \fref{theo:mlsdc_conv_gen}. Thereby, we will reveal the dependence of MLSDC's convergence behavior on those parameters and simultaneously verify the general minimal achievable convergence order of the method. Altogether, this will confirm the theoretical results of the previous section.



For the first test, the number of degrees-of-freedom was set to $N_h = 255$ on the fine and $N_H = 127$ on the coarse level of MLSDC. This parameter particularly determines the spatial grid size $\Delta x = \frac{1}{N+1}$.
As discussed before, we use injection as restriction and a piecewise $p$-th order Lagrange interpolation as interpolation, for now with $p=8$.
Moreover, the smooth initial value $u_0(x) = \sin(4\pi x)$ was spread across the different nodes $\tau_m$ to form the initial guess.


\begin{figure}
    \includegraphics[width=\textwidth]{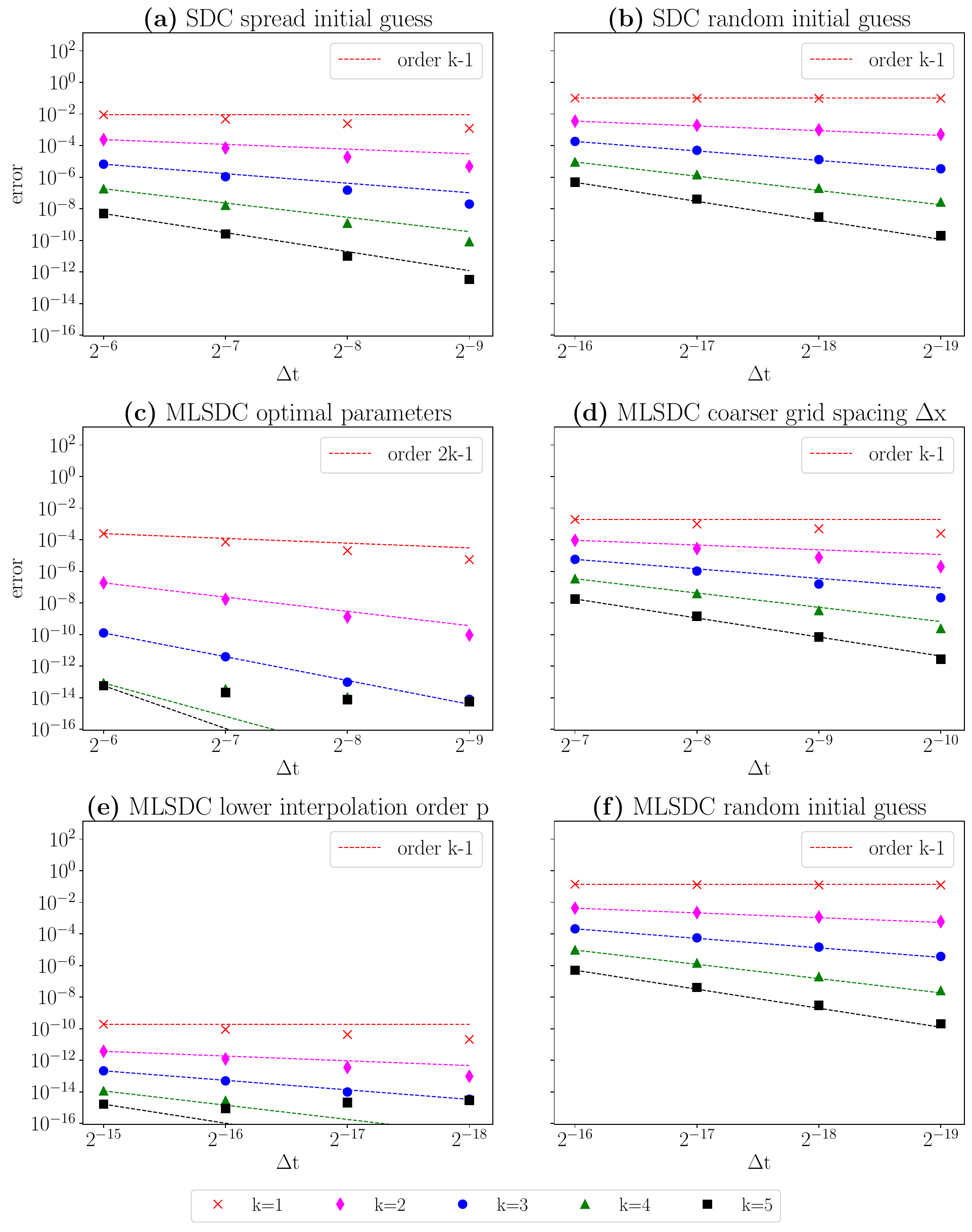}
    \caption{Convergence behavior of SDC and MLSDC applied to the discretized one-dimensional heat equation with coarsening in space (for MLSDC) and different parameters}
    \label{fig:heat}
\end{figure}

An illustration of the corresponding numerical results is shown in \fref{fig:heat}c, with the reference SDC result in \fref{fig:heat}a. MLSDC was applied with different step sizes $\Delta t$ and numbers of iterations $k$ to the considered problem and the resulting errors were plotted as points in the respective graphs. The drawn lines, on the other hand, represent the expected behavior, i.e.\ the predicted convergence orders of the method according to \fref{theo:mlsdc_conv_prec}. In particular, we assume that the terms $\Delta x^p$ and $C(E)$ are sufficiently small such that $\Delta t^{k_0+2k}$ is the leading order in the corresponding error estimation. As a result, MLSDC is expected to gain two orders per iteration. In the figure, it can be seen that nearly all of the computed points lie on the expected lines (except when the errors hit the precision limit at approx. $10^{-14}$) which always start at the error resulting for the largest step size. Therefore, the numerical results match the theoretical predictions. 

Note that the gain in accuracy when comparing two runs with $k$ and $k+1$ for a fixed $\dt$ is not has high as one might expect. As mentioned in Remark~\ref{rem:Lip_k}, this is due to the fact that the error is multiplied with $L^k$, where $L>1$ is the Lipschitz constant of the discretized right-hand side of the ODE~\eqref{equ:heat_ode}.


If, by contrast, the spatial grid size $\Delta x$ is chosen to be significantly larger, in particular as large as $\frac{1}{16}$ on the fine and $\frac{1}{8}$ on the coarse level, the leading order in \fref{theo:mlsdc_conv_prec}, presenting an error estimation for MLSDC, changes to $\Delta t^{k_0+k}$. Hence, in this example, we expect MLSDC to only gain one order in $\Delta t$ with each iteration, as SDC does and as it was described in the general convergence \fref{theo:mlsdc_conv_gen}. The corresponding numerical results, presented in \fref{fig:heat}d, confirm this prediction. For both methods, the error decreases by one order in $\Delta t$ with each iteration.

Another possible modification of the first example is a decrease of the interpolation order $p$. \Fref{fig:heat}e shows the numerical results if this parameter is changed to $p=4$. Apparently, this also leads to an order reduction of MLSDC compared to \fref{fig:heat}c. According to \fref{theo:mlsdc_conv_prec}, this is a reasonable behavior. In particular, the leading order in the presented error estimation is again reduced to $\Delta t^{k_0+k}$ due to the higher magnitude of $\Delta x^p$. Besides, it should be noted that the considered values of $\Delta t$ are significantly smaller here. This is caused by the fact that MLSDC does not converge for greater values of this parameter, i.e.\ the upper bound for $\Delta t$, implicitly occurring in the assumptions of the respective theorem, seems to be lower here. The smaller step sizes $\Delta t$ also entail overall smaller errors. As a result, the accuracy of the collocation solution is reached earlier which explains the outliers in the considered plots.

The third necessary condition for the improved convergence of MLSDC is the magnitude of the remainders $E_m$ or, in other words, the smoothness of the error. In this context, we will now have a look at the changes which result from a higher oscillatory initial guess. In particular, we will assign random values to $U^{(0)}$. The corresponding errors are shown in \fref{fig:heat}b for SDC and \fref{fig:heat}f for MLSDC. It can be seen that this change results again in a lower convergence order of MLSDC, in particular it gains one order per iteration as SDC. Since this time, as the crucial term $\Delta x^p$ is left unchanged, the result can only be assigned to a higher value of $C(E)$ and thus to an insufficient smoothness of the error. This may lead to the assumption that for this problem type a smooth initial guess is a sufficient condition for the smoothness of the error and thus, a low value of $C(E)$.

\subsection{Allen-Cahn equation}

The second test case is the non-linear, two-dimensional Allen-Cahn equation
\begin{align}\label{eq:ac}
	u_t &= \Delta u + \frac{1}{\epsilon^2} u(1-u^2)\quad \mathrm{on}\quad [-0.5, 0.5]^2\times[0,T],\ T>0,\\
    u(x,0) &= u_0(x),\quad x\in[-0.5,0.5]^2,\nonumber
\end{align}
with periodic boundary conditions and scaling parameter $\epsilon > 0$.
We use again second-order finite differences in space and choose a sine wave in 2D as initial condition, i.e.\ $u_0(x) = \sin(\kappa \pi x)\sin(\kappa\pi y)$.
There is no analytical solution, neither for the continuous nor for the spatially discretized equations. 
Therefore, reported errors are computed against a numerically computed high-order reference solution.
For the tests, we choose $\kappa=4$, $\epsilon = 0.2$ and $M=3$ Gauß-Radau collocation nodes.

The tests are structured precisely as for the heat equation: we first show second-order convergence factors using appropriate parameters and then test the sharpness of the conditions on smoothness, the resolution and the interpolation order.
For the first test, the number of degrees-of-freedom per dimension was set to $N_h = 128$ on the fine and $N_H = 64$ on the coarse level of MLSDC.
Transfer operators are the same as before.

\begin{figure}
    \includegraphics[width=\textwidth]{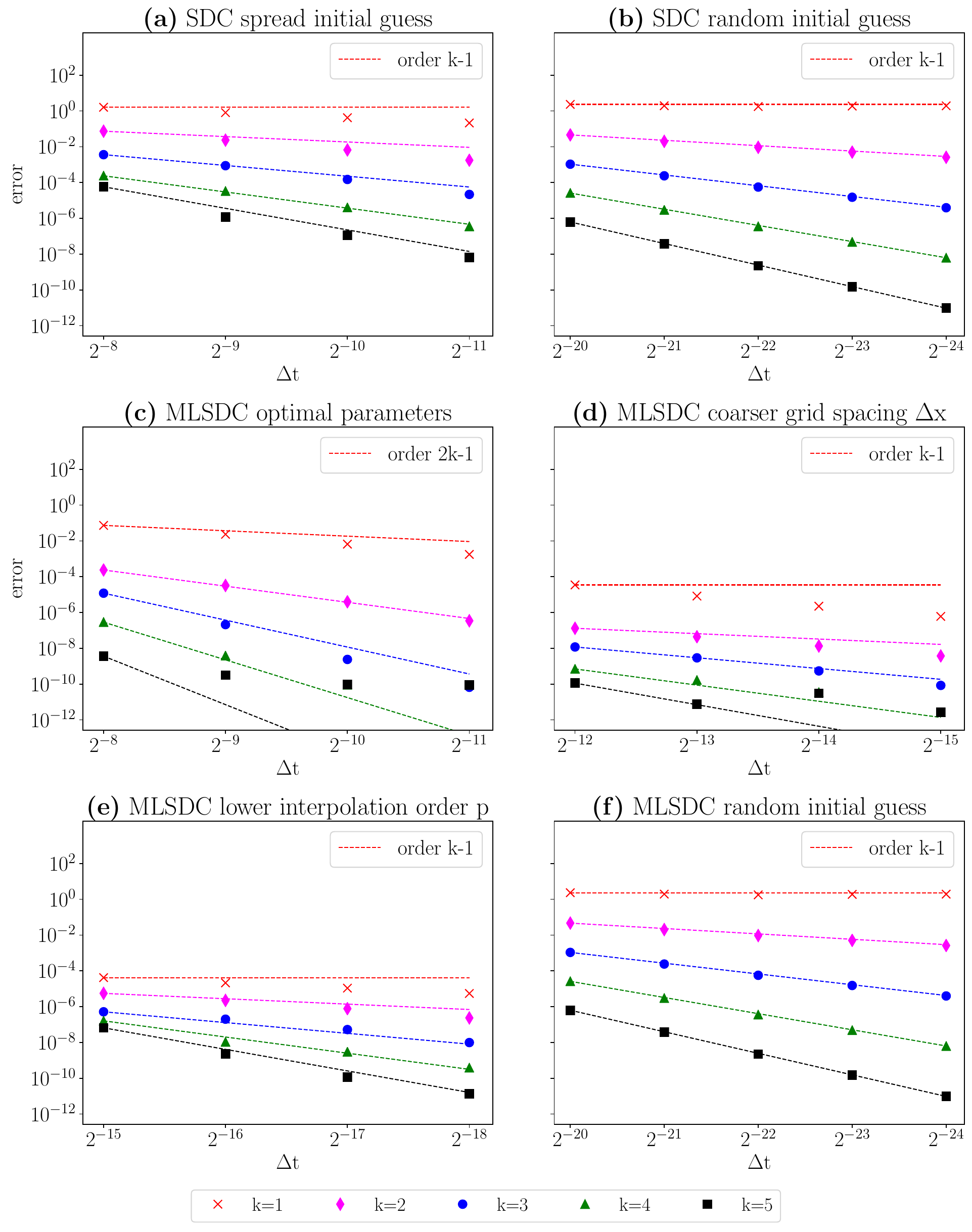}
    \caption{Convergence behavior of SDC and MLSDC applied to the discretized two-dimensional Allen-Cahn equation with coarsening in space (for MLSDC) and different parameters}
    \label{fig:allencahn}
\end{figure}

Figure~\ref{fig:allencahn} shows the results of our tests for the Allen-Cahn equation for both SDC and MLSDC.
The main conclusion here is the same as before: using less degrees-of-freedom (here $N=32$ on the fine level instead of $128$), a lower interpolation order (here $p=2$ instead of $8$) or a non-smooth (here random) initial guess leads to a degraded order of the convergence factor.

\subsection{Auzinger's test case}

The third test case is the following two-dimensional ODE introduced in \cite{auzinger2004modified}:

\begin{equation}\begin{gathered}
    \dot{u} = \begin{pmatrix} \dot{x} \\ \dot{y} \end{pmatrix} 
    = \begin{pmatrix} -y - \lambda x(1-x^2-y^2) \\ x - \lambda \rho y(1-x^2-y^2) \end{pmatrix}, \quad \forall t \in [0,t_{end}] \\\
    u(0) = u_0,
\end{gathered}\end{equation}
where $\lambda<0$ determines the stiffness of the problem and $\rho>0$ is a positive parameter.
For the tests, we choose $\lambda=-0.75$, $\rho=3$ and $u_0 = (1,0)^T$.
The analytical solution of this initial value problem is known. It is given by
\begin{align*}
    u(t) = \begin{pmatrix} x(t) \\ y(t) \end{pmatrix} = \begin{pmatrix} \cos(t) \\ \sin(t) \end{pmatrix},\quad t \in [0,t_{end}].
\end{align*}

The corresponding tests are structured in a similar way as before but this time the ODE version of \fref{theo:mlsdc_conv_prec}, given by \fref{equ:err_coarse_time}, is considered. So, first appropriate parameters are used to reach second-order convergence of MLSDC, and then the sharpness of the implied conditions is tested. In particular, the improved convergence behavior of MLSDC is expected to depend on the time step size $\Delta \tau = C \Delta t$, the (now temporal) interpolation order $p$ and the smoothness of the error in time. In our tests, we always used the maximal interpolation order $p=M_H$ corresponding to the number of collocation nodes on the coarse level, since otherwise it was not possible to get a second order convergence at all. The number of nodes on the fine grid was chosen to be $M_h=8$.

\begin{figure}
    \includegraphics[width=\textwidth]{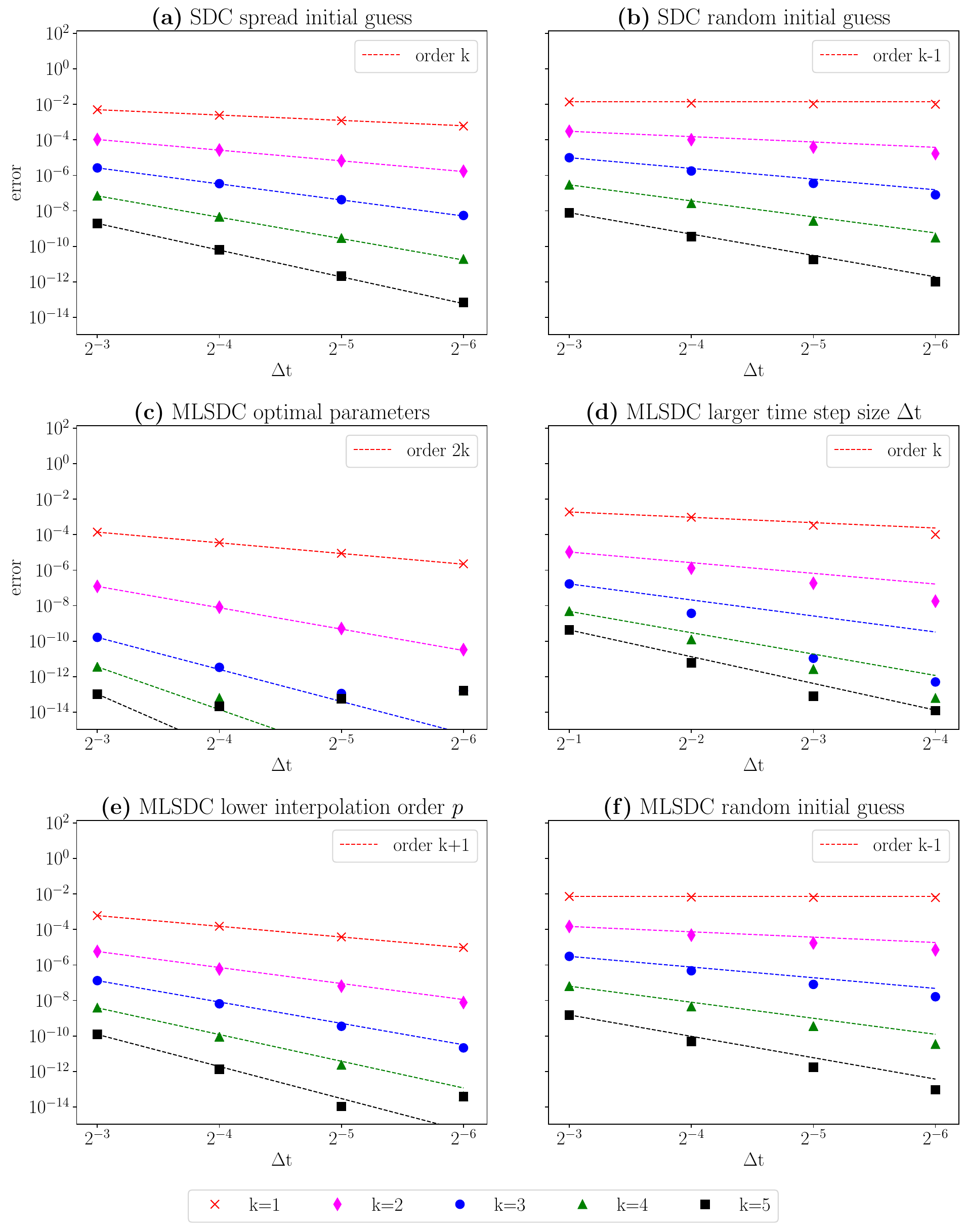}
    \caption{Convergence behavior of SDC and MLSDC applied to the Auzinger problem with coarsening in the collocation nodes (for MLSDC) and different parameters}
    \label{fig:auzinger}
\end{figure}

The numerical results are shown in \fref{fig:auzinger}. Again, they agree with our theoretical predictions: All of the three conditions implied by \fref{equ:err_coarse_time} need to be fulfilled to reach second-order convergence of MLSDC. A larger time step size (here $\Delta t \in [2^{-1}, 2^{-4}]$ instead of $[2^{-3}, 2^{-6}]$), a lower interpolation order (here $p=M_H=2$ instead of $6$) or a non-smooth (here random) initial guess immediately led to a decrease in the order.

However, there are a few oddities in the graphs that we would like to discuss here. First of all, the orders shown in \fref{fig:auzinger}a, c and d are not $2k-1$ and $k-1$ as we would expect, but rather $2k$ and $k$. This behavior is probably related to the $k_0$-term in the estimates which stems from the initial guess of SDC and MLSDC. This explanation would also agree with the result that this additional order gets lost if a random initial guess is used (see \fref{fig:auzinger}b, f).
Aside from that, it should be noted that the use of a lower interpolation order (\fref{fig:auzinger}e) led to a convergence order of $k+1$ instead of $k$ as we would have expected. The reason for this is not clear but could be related to \fref{equ:mlsdc_conv_prec_lte} which implies that all orders between $k$ and $2k$ can potentially be reached. In any case, the result shows that the second-order convergence is lost if the interpolation order is decreased. Finally, we want to discuss the plot in \fref{fig:auzinger}d resulting from the use of a larger time step size $\Delta t$.
It can be seen that the data points do not perfectly agree with the predicted lines here. Apparently, the numerical results are often much better than expected. However, they do not reach order $2k$ and hence confirm our theory that the second-order convergence of MLSDC is also dependent of a small time step size. The deviations in the data are, in fact, not too surprising here, considering that the time step size is a very crucial parameter for the convergence of MLSDC in general. As described in \fref{theo:mlsdc_conv_gen} and \ref{theo:mlsdc_conv_prec}, $\Delta t$ has to be small enough in order for MLSDC to converge at all. For that reason, the possible testing scope for the time step size is rather small, making it difficult to find appropriate parameters where MLSDC converges exactly with order $k$.

\subsection{Further observations}\label{sec:examples_disc}

The artifacts described above shed some light on the ``robustness'' of the results, a fact that we would like to share here: during the tests with all three examples, we saw that it is actually very hard to get these more or less consistent results. 
All model and method parameters had to be chosen carefully in order to support the theory derived above so clearly.
In many cases the results were much more inconsistent, showing e.g.\ convergence orders somewhere between $k$ and $2k$, changing convergence orders or stagnating results close to machine precision or discretization errors.
None of the tests we did contradicted our theoretical results, though, but they revealed that the bounds we obtained are indeed rather pessimistic.

\diff{
\begin{table}[t]
\begin{tabular}{|c|c|ccc|}
	\hline
	
	\textbf{Example}	& \textbf{Method}	& \multicolumn{3}{c|}{\textbf{Order in $\Delta t$}} \\ 
	
	\hline\hline
	
	\multirow{4}{*}{Heat1D} & & 
	$2^{-6} \rightarrow 2^{-7}$ & $2^{-7} \rightarrow 2^{-8}$ & $2^{-8} \rightarrow 2^{-9}$ \\
	\hline
	&	SDC&    			0.844&     0.928&	0.969   \\
	&	MLSDC&  			0.196&    -0.442&   -1.064  \\
	&	MLSDC ($k=1,2$)&	1.632&     1.754&   0.443   \\
	
	\hline\hline
	
	\multirow{4}{*}{Allen-Cahn} & &
	$2^{-8} \rightarrow 2^{-9}$ & $2^{-9} \rightarrow 2^{-10}$ & $2^{-10} \rightarrow 2^{-11}$ \\
	\hline
	&	SDC&    			1.537&     0.574&     0.749     \\
	&	MLSDC&  			-0.247&    -3.341&    -1.097    \\
	&	MLSDC ($k=1,2$)&  	2.7652&    2.719&     1.629     \\
	
	\hline\hline
	
	\multirow{4}{*}{Auzinger} & &
	$2^{-3} \rightarrow 2^{-4}$ & $2^{-4} \rightarrow 2^{-5}$ & $2^{-5} \rightarrow 2^{-6}$ \\
	\hline
	&	SDC&    			0.968&     0.988&		0.976    \\
	&	MLSDC&		   		-2.762&    -2.062&     	-0.413   \\
	&	MLSDC ($k=1,2$)&	1.799&     1.202&       -4.125   \\
	
	\hline
	
\end{tabular}
\caption{\diff{Order of convergence in $\dt$, computing the error at iteration $k$ vs.~the error at iteration $k+1$, taking the mean over these ratios for a fixed $\dt$ and then computing the order of convergence when going from one $\dt$ to a finer one $\dt/2$ (which is indicated by the ``$\rightarrow$''). Ideally, SDC shows order $1$, MLSDC order $2$. The third row for each example only considers the first two iterations to avoid noise from stalling convergence.}}
\label{tab: conv_order_dt}
\end{table}

In Table~\ref{tab: conv_order_dt} we show the order of convergence in $\dt$ for the three test problems.
More precisely, we compute for a fixed $\dt$ the ratio between the error at iteration $k$ and the error at iteration $k+1$.
This corresponds, in a sense, to the convergence order in $k$ and in order to check whether this is indeed of order $\dt$ for SDC (i.e.~gaining one order of accuracy per iteration) and of order $\dt^2$ for MLSDC (i.e.~gaining two orders of accuracy per iteration), we take the mean over the ratios for a fixed $\dt$ and compute the order by comparing two different $\dt$.
The results can be seen in Table~\ref{tab: conv_order_dt} and they clearly indicate how noisy the convergence results are. 
While for SDC we do indeed see orders around $1$, the results for MLSDC are far away from clear or consistent.
This is due to stalling convergence and outliers, as we have already seen in the plots above.
When considering only the first two iterations (third row in each example), the results get closer to order $2$.
Again, this shows that obtaining consistent results is a rather delicate task.
}

In addition, one may wonder why the time step sizes are chosen so small in many of the tests above, especially since we deal with implicit time stepping schemes.
In Figure~\ref{fig:bigdt} we show the results for the same three equations as before, but now with larger $\dt$.
On the left, SDC is shown, while the right column shows MLSDC with otherwise ideal parameter choices.
The measured errors still follow the lines indicating the expected orders, but the results are less consistent.
We see e.g.~outliers in~\ref{fig:bigdt}(a) and (b), early stagnation because of large $\dt$ in (d) as well as a somewhat unclear order in (f).
Still, even for these choices of $\dt$ both SDC and MLSDC converge reasonably well and fast. 
However, this is beyond the range of the convergence proofs presented here, so that convergence, convergence bounds and orders of accuracy cannot be guaranteed.
For applying the convergence theorems both of SDC and MLSDC, certain bounds on $\dt$ must be taken into account, although actual results suggest that these bounds are way too restrictive.
These bounds are present for most SDC convergence results (see e.g.~\cite{hagstrom,huang,causley}), at least those relying on the matrix formulation we used as our starting point.
Removing or at least relaxing these bounds in this approach is a promising further research direction for both SDC and MLSDC. 

\begin{figure}
    \includegraphics[width=\textwidth]{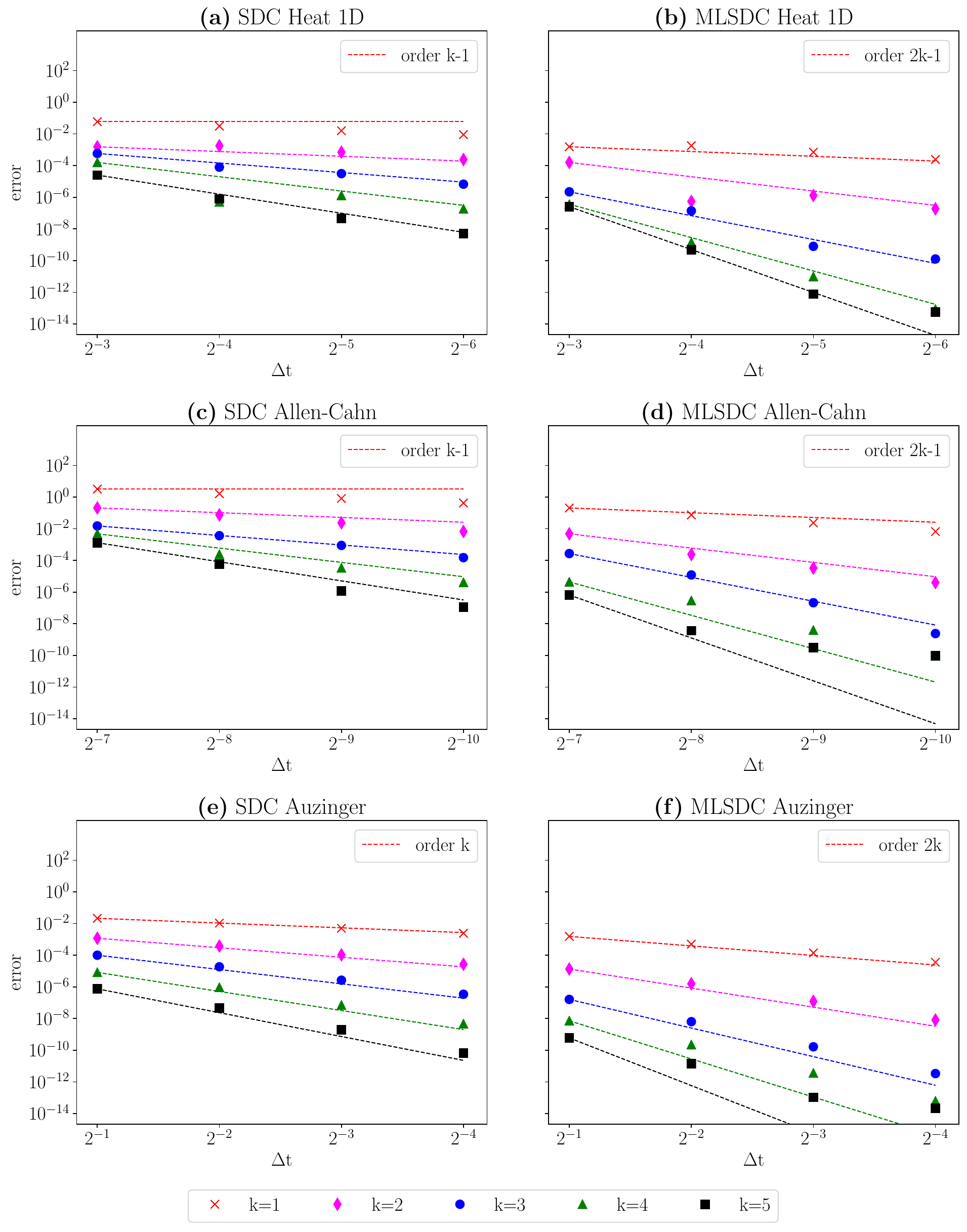}
    \caption{Convergence behavior of SDC and MLSDC for all three examples, now with larger $\dt$.}
    \label{fig:bigdt}
\end{figure}

\section{Conclusions and Outlook}


In this paper, we established two convergence theorems for multi-level spectral deferred correction (MLSDC) methods, using similar concepts and ideas as those presented in \cite{tang} for the proof of the convergence of SDC. In the first theorem, namely \fref{theo:mlsdc_conv_gen}, it was shown that with each iteration of MLSDC the error compared to the solution of the initial value problem decreases by at least one order of the chosen step size $\Delta t$, limited by the accuracy of the underlying collocation solution. The corresponding theorem only requires the operator on the right-hand side of the considered initial value problem to be Lipschitz-continuous, not necessarily linear, and the chosen time step size $\Delta t$ to be sufficiently small.
Consequently, we found a first theoretical convergence result for MLSDC proving that it converges as good as SDC does. However, we would expect and numerical results already indicated that the additional computations on the coarse level, more specifically the SDC iterations performed there, would lead to an improved convergence behavior of the method. 

For that reason, we analyzed the errors in greater detail, leading to a second theorem on the convergence of MLSDC, namely \fref{theo:mlsdc_conv_prec}. Here, we focused on a specific coarsening strategy and transfer operators. In particular, we considered MLSDC using coarsening in space with Lagrangian interpolation. Given these assumptions, we could prove that, if particular conditions are met, the method can even gain two orders of $\Delta t$ in each iteration until the accuracy of the collocation problem is reached. This consequently led us to theoretically established guidelines for the parameter choice in practical applications of MLSDC in order to achieve the described improved convergence behavior of the method. More specifically, the corresponding theorem says that for this purpose the spatial grid size on the coarse level has to be small, the interpolation order has to be high and the errors have to be smooth. 
We presented numerical examples which confirm these theoretical results. In particular, it could be observed that the change of one of those crucial parameters immediately led to a decrease in the order of accuracy. Essentially, it resulted in a convergence behavior as it was described in the first presented theorem.

Besides the research direction mentioned in Sect.~\ref{sec:examples_disc}, there are several open questions related to the presented work which have not yet been investigated. Three of them are briefly discussed here.

\textbf{More information, better results.}
The results presented here are quite generic.
As a consequence, since we only assume Lipschitz continuity of the right-hand side of the ODE and do not pose conditions on the SDC preconditioner, both constants and step size restrictions are rather pessimistic.
Using more knowledge of the right-hand side or the matrix $Q_\Delta$ will yield better results, as it already did for SDC.
Since the goal of this paper is to establish a baseline for convergence of MLSDC, exploiting this direction, especially with respect to the treatment of convergence in the stiff limit as done in~\cite{sdc-lu} for SDC, is left for future work.

\textbf{Smoothness of the error.}
The second theorem, describing conditions for an improved convergence behavior of MLSDC, has a drawback regarding its practical significance.  
The way \fref{theo:mlsdc_conv_prec} is currently proven requires a smooth error after its periodic extension.
This occurring condition of a smooth error does not always apply and is in particular not easy to control. 
Essentially, something like a smoothing property would be needed to ensure that the error always becomes smooth after enough iterations. Numerical results indicate that this property apparently does not hold for SDC, though~\cite{conv-pfasst}. In this context, however, it would be sufficient if we could at least control this condition, i.e.\ derive particular criteria for the parameters of the method ensuring the errors to be smooth. The numerical examples presented in \fref{sec:examples} particularly lead to the assumption that the selection of a smooth initial guess $U^{(0)}$ would result in smooth errors for $U^{(k)}$, $k \geq 1$, at least for a particular set of problems. 

\textbf{Other extensions of SDC.}
Furthermore, it could be tried to adapt the presented convergence proofs of MLSDC to other extensions and variations of SDC, as for example the parallel-in-time method PFASST (Parallel Full Approximation Scheme in Space and Time)~\cite{pfasst} or general semi-implicit and multi-implicit formulations of SDC (SISDC/MISDC)~\cite{imex-1,imex-2}. Whereas an adaptation to SISDC and MISDC methods seems to be rather straightforward \cite{causley}, we found that the application of similar concepts and ideas to prove the convergence of PFASST may involve some difficulties. In particular, the coupling of the different time steps, i.e.\ the use of the approximation at the endpoint of the last subinterval for the start point of the next one, could cause a problem in this context since the corresponding operator is independent of $\Delta t$ and would thus add a constant term to our estimations.







\bibliographystyle{gtart}      

\bibliography{refs}   

\end{document}